\documentclass[12pt]{amsart}
\usepackage{txfonts}
\usepackage{mathrsfs}
\usepackage{graphicx}

\usepackage{amsfonts}
\usepackage{amsthm}
\usepackage{graphics}
\usepackage{indentfirst}
\usepackage{cite}
\usepackage{latexsym}
\usepackage{amsmath}
\usepackage{amssymb}
\usepackage[dvips]{epsfig}
\usepackage{amscd}

\hoffset -1.2cm

\setlength{\parindent}{12pt}                
\setlength{\parskip}{3pt plus1pt minus2pt}  
\setlength{\baselineskip}{20pt plus2pt minus1pt}
 \setlength{\textheight}{23.5 true cm}      
 \setlength{\textwidth}{14.5 true cm}
  \topmargin    -0.5cm
\newtheorem{theorem}{Theorem}[section]
\newtheorem{remark}{Remark}[section]

\newtheorem{lemma}[theorem]{Lemma}

\newcommand{\bt}{\begin{theorem}}
\newcommand{\bl}{\begin{lemma}}
\newcommand{\el}{\end{lemma}}
\newcommand{\et}{\end{theorem}}

\newcommand{\bn}{\begin{eqnarray}}
\newcommand{\en}{\end{eqnarray}}
\newcommand{\bnn}{\begin{eqnarray*}}
\newcommand{\enn}{\end{eqnarray*}}

\newcommand{\ba}{\begin{aligned}}
\newcommand{\ea}{\end{aligned}}
\newcommand{\be}{\begin{equation}}
\newcommand{\ee}{\end{equation}}

\def\norm[#1]#2{\|#2\|_{#1}}

\newcommand{\BE}{{\boldsymbol{E}}}
\newcommand{\BH}{{\boldsymbol{H}}}
\newcommand{\BJ}{{\boldsymbol{J}}}

\newcommand{\Bg}{{\boldsymbol{g}}}
\newcommand{\Bn}{{\boldsymbol{n}}}
\newcommand{\Bv}{{\boldsymbol{v}}}

\newcommand{\BP}{{\boldsymbol{\Phi}}}

\makeatletter      
\@addtoreset{equation}{section}
\makeatother       

\begin{document}

\title[Regularity Results for Maxwell Equations]
{Regularity results for the time-harmonic  Maxwell equations with impedance boundary condition }

\author{Peipei Lu}
\address{Department of Mathematics, Soochow University, SuZhou\\
215006, People's Republic of China}
\email{pplu@suda.edu.cn}
\date{}

\author{Yun Wang}
\address{Department of Mathematics, Soochow University, SuZhou\\
215006, People's Republic of China}
\email{ywang3@suda.edu.cn}
\date{}

\author{Xuejun Xu}
\address{LSEC, Institute of
Computational Mathematics and Scientific/Engineering Computing,
Academy of Mathematics and System Sciences, Chinese Academy of
Sciences, People's Republic of China and
School of Mathematical Sciences, Tongji University, Shanghai, P. R. China}
\email{xxj@lsec.cc.ac.cn}

\maketitle
\begin{abstract}
This paper considers the time-harmonic Maxwell equations with impedance boundary condition.
We present $H^2$-norm bound and other high-order norm bounds for strong solutions.
The $H^2$-estimate have been derived in [M. Dauge, M. Costabel and S. Nicaise, Tech. Rep. 10-09, IRMAR (2010)] for the case with homogeneous boundary condition. Unfortunately,
their method can not be applied to the inhomogeneous case. The main novelty of this paper is that we follow
the spirit of the $H^1$-estimate in [R. Hiptmair,  A. Moiola and I. Perugia, Math. Models Methods Appl. Sci., 21(2011), pp. 2263-2287] and modify the proof by applying two inequalities of Friedrichs' type to make the $H^1$-estimate move into $H^2$-estimate and $W^{m, p}$-estimate.
Finally, the dependence of the regularity estimates on the wave number is obtained, which will play an important  role in the convergence analysis of the numerical solutions for the time-harmonic Maxwell equations.
\end{abstract}

{\bf Keywords:} regularity estimates; time-harmonic Maxwell equations; impedance boundary condition;
high wave number.

{\bf AMS Subject Classification:} 35B65, 35D30, 35Q61, 65N15.


\section{Introduction}
In this paper, we consider the following time-harmonic Maxwell boundary value problem:
\begin{equation}\label{Maxwell}
\left\{ \begin{array}{l}
- i \omega\epsilon \BE - \nabla \times \BH = - \BJ /i\omega,\ \ \ \ \mbox{in}\ \Omega,\\[2mm]
-i \omega \mu \BH + \nabla \times \BE = 0,\ \ \ \ \  \mbox{in}\ \Omega,\\[2mm]
\BH \times \Bn - \lambda \BE_T = \Bg /i\omega, \ \ \ \ \mbox{on}\ \partial \Omega,
\end{array}
\right.
\end{equation}
where $\BE$ is the electricity field, $\BH$ is the magnetic field, $\BJ$ is related to a given current density with ${\rm div}~\BJ  =0$ in $\Omega$. $\omega>0$ is a fixed wave number and  the material coefficients $\epsilon,\mu ,\lambda\in \mathbb R$ are assumed to be constant with $\epsilon,\mu>0$ and $\lambda \not=0$. $i$ denotes the imaginary unit.  $\Bn$ denotes the unit outward normal
to $\partial \Omega$, and $\BE_T = ( \Bn \times \BE ) \times \Bn$ denotes the tangential component of the electric field
$\BE$. The boundary condition is the standard impedance boundary condition which requires ${\Bg} \cdot \Bn = 0$,
thus, ${\Bg}_T = {\Bg}$.
The above Maxwell equations are of considerable importance in physics and mathematics.

The Maxwell's operator is strongly indefinite for high wave number $\omega$, which brings difficulties both in
theoretical analysis and numerical simulation. Various finite element methods \cite{Nedelec80,Nedelec86,Hiptmair02,Brenner07,Zhong09} have been developed to solve the Maxwell's problem. However, the error analysis and the uniqueness of the numerical solution can only be derived under the restrictive constraint $\omega^2h\leq C$, where $h$ is the mesh size. This
constraint is not practical in real world especially  for the three-dimensional case  with  large $\omega$.
Recently, Feng and Wu \cite{FW2014}  proposed and analyzed an interior penalty discontinuous Galerkin (IPDG) method
for the problem (\ref{pde_original}) with the high wave number, which is uniquely solvable without any mesh constraint.
This is a big step in the discretization and  theoretical analysis  of the finite element method for the  time-harmonic Maxwell equation with high wave number.
It is well known that the dimension of approximation DG space is
much larger than the dimension of the corresponding conforming space. To address
this issue, two HDG methods were presented in \cite{hp-HDG,HDG-2} for the numerical solution of the
Maxwell problem. These HDG methods retain the advantages of the standard DG
methods and result in a significant reduced degrees of freedom.
The methods in \cite{FW2014,hp-HDG,HDG-2} belong to a class of absolutely stable  methods for the time-harmonic Maxwell equations, and the
numerical results show their advantages over the standard finite element method in \cite{Monk}, especially when the wave number is large.
Unfortunately, the error estimates of the above methods are not complete, the theoretical analysis of them are all based on the following
assumption of the $H^2$ regularity estimate for  the electric field $\BE$:
\begin{align}
\label{regularity-H2} \|\BE\|_{H^2(\Omega)} \leq C
(1+\omega) \left( \|\BJ \|_{L^2(\Omega)} + \|\Bg\|_{H^{\frac12}(\partial \Omega) } \right),
\end{align}
 where the constant $C$ is independent of $\omega$. In this paper our main goal is to get the wave-explicit $H^2$-estimate for $\BE$.

By expressing $\BH$ in terms of $\BE$,  the above problem \eqref{Maxwell} is transformed into the following equations in terms of only $\BE$:
\begin{subequations}
\label{pde_original}
\begin{align}
\label{pde_original_1}
\nabla \times (\mu^{-1}\nabla \times \BE) - \omega^2 \epsilon\BE &= {\BJ} \qquad \rm{in}\ \Omega,\\
\label{BC-PDE}
(\mu^{-1}\nabla \times \BE )\times \Bn - i  \omega\lambda \BE_T &={\Bg} \qquad \rm{on}\ \partial\Omega.
\end{align}
\end{subequations}
Introduce the energy space
\begin{equation}\nonumber
H_{imp}({\rm curl}; \Omega)= \left\{\Bv \in L^2(\Omega): \ \nabla \times \Bv \in L^2(\Omega), \ \ \Bv_T \in L^2(\partial \Omega)    \right\},
\end{equation}
existence and uniqueness of solutions in $H_{imp}({\rm curl}; \Omega)$ was proved in Theorem 4.17 of \cite{Monk}, through the variational formulation of the problem \eqref{pde_original}.

One next topic is the regualrity of the unique solution. When $\Bg=0$,  M. Dauge, M. Costabel and S. Nicaise\cite{Dauge} made much effort on this homogeneous case. They gave an innovative proof for the $H^1$-estimate of $\BE$ and $ \BH$. Regarding the high-order estimate, they found that the variational formulation for the electric field $\BE$ does not define an elliptic problem, nor the variational formulation for the magnetic field $\BH$. Inspired by \cite{Nedelec01}, they considered a coupled regularized formulation for the full electromagnetic field $(\BE, \BH)$, and derived the high-order estimate for $(\BE, \BH)$. However,  their method can not be applied to the nonhomogeneous case $\Bg \neq 0$. And the dependence of the estimate on the wave number $\omega$ is not explicit. On the other hand,  Hiptmair-Moiola-Perugia\cite{HM}
 established some wave-explicit $H^1$-estimate.  It is shown that for $C^2$ domain, under the assumptions made in Theorem  4.1 in \cite{HM}, the $H^1$ regularity of both $\BE$ and $\nabla \times \BE$ can be obtained as follows:
\begin{align}
\label{regularity-H1} \|\nabla \times \BE\|_{H^1(\Omega)} +\omega \|\BE\|_{H^1(\Omega)}\leq C
(1+\omega)(\|{\BJ}\|_{L^2(\Omega)}+ \|{\Bg}\|_{L^2(\partial \Omega)})+\|{\Bg}\|_{H^{\frac12}(\partial \Omega)}.
\end{align}



In this paper, we will give a proof of the wave-explicit $H^2$-estimate. We will deal with the nonhomogeneous case, for which $\Bg$ may not vanish.  Let us highlight our main theoretical results:
Let $\Omega \subset \mathbb{R}^3$ be a bounded $C^{m+1}$-domain and star-shaped with respect to $B_\gamma (x_0)$. In addition to the assumptions made on $\BJ, \Bg$ and on the material coefficients, we assume that
$\BJ \in H^{m-1}(\Omega)$ and $\Bg\in H^{m-\frac12}(\partial \Omega)$. Then there exists one constant $C$ independent of $\omega$, but depending on $\Omega$, $\lambda$, $\epsilon$, $\mu$, such that, if $\BE$ is the solution to \eqref{pde_original},
\begin{equation}\nonumber   \begin{array}{ll}
\|\nabla \times \BE\|_{H^m(\Omega)} + \omega \| \BE\|_{H^m(\Omega)}
& \leq C (1+ \omega^m) \left( \|\BJ \|_{L^2(\Omega)} + \|\Bg\|_{L^2(\partial \Omega)} \right)  + C \omega^{m-1} \|\Bg\|_{H^{\frac12}(\partial \Omega) }   \\[2mm]&\ \ \ \ \displaystyle + C \sum_{k=1}^{m-1}  \omega^{m - k -1} \left(   \|\BJ\|_{H^k(\Omega)}
+ \|\Bg\|_{H^{k+\frac12}(\partial \Omega)} \right).
\end{array}
\end{equation}
Especially when $m=2$, we have
\begin{equation}\label{wang2} \begin{array}{ll}
\omega \| \BE\|_{H^2(\Omega)}
 & \leq C (1+ \omega^2) \left( \|\BJ \|_{L^2(\Omega)} + \|\Bg\|_{L^2(\partial \Omega)} \right) + C \omega \|\Bg\|_{H^{\frac12}(\partial \Omega) }\\
&  \ \ \ \ \ \ + C\left(   \|\BJ\cdot \Bn\|_{H^{\frac12}(\partial \Omega)}
+ \|\Bg\|_{H^{\frac{3}{2}}(\partial \Omega)} \right).
\end{array}
\end{equation}
Besides, for the case ${\rm div}~\BJ \neq 0$, some similar regularity results can be found in Remark \ref{non-divergence} of this paper. Furthermore, we extend the estimates into $W^{m, p}$ space.

As said above, the main idea of \cite{Dauge} for $H^2$-estimate is  rewriting the Maxwell equations in the form of elliptic equations of $\BE$ and $\nabla \times \BE$. To our knowledge, the method can not be applied to the case with nonhomogeneous boundary condition directly. Our proof is in the same spirit of the $H^1$-estimate in \cite{Dauge, HM}.   To make the $H^1$-estimate move into $H^2$-estimate and $W^{m, p}$-estimate, we apply two inequalities of Friedrichs' type, which is the main novelty of our method. Compared to the $H^2$-estimate in \cite{Dauge}, our proof is much simpler and appliable for a wider range of cases.  

The remainder of this paper is organized as follows: In Section 2, we introduce some basic function spaces and two inequalities of Friedrichs' type. Section 3 is devoted to the regularity estimates of  $\BE$ and $\nabla \times \BE$ in
$H^m$-norm while in Section 4  we extend
the  regularity estimates to $W^{m,p}$-space.

\section{Preliminaries}
First, let us introduce some function spaces. Let $L^p(\Omega)$ denote the usual vector-valued $L^p$-space over $\Omega$, $1\leq p \leq \infty.$ Let
$$W^{m, p}(\Omega)= \{ \Bv \in L^{p}(\Omega):\ D^\alpha \Bv \in L^p(\Omega),\ |\alpha| \leq m\},\ \ \ m \in \mathbb{N}.$$
Define the spaces:
$$L^p({\rm div}; \Omega) = \{\Bv\in L^p(\Omega):\ {\rm div}~\Bv \in L^p(\Omega)   \},$$
$$L^p({\rm curl}; \Omega) = \{ \Bv \in L^p(\Omega):\ {\rm curl}~\Bv \in L^p(\Omega)  \},$$
$$W^{m, p}({\rm div}; \Omega) = \{ \Bv\in W^{m, p}(\Omega): \ {\rm div}~\Bv \in W^{m, p}(\Omega)\},$$
$$W^{m, p}({\rm curl}; \Omega) = \{ \Bv\in W^{m, p}(\Omega): \ {\rm curl}~\Bv \in W^{m, p}(\Omega) \},$$
where ${\rm curl}~\Bv = \nabla \times \Bv$, the vorticity of $\Bv$.
When $p=2$, let us denote
$$H^m(\Omega)= W^{m, 2}(\Omega),\ \ \ H^m({\rm div}; \Omega) = W^{m, 2}({\rm div}; \Omega),\ \ \ \ H^m({\rm curl}; \Omega) = W^{m, 2}({\rm curl}; \Omega).$$

For every function $\Bv\in L^p( {\rm div}; \Omega)$, we denote $\Bv\cdot \Bn$ the normal boundary value of $\Bv$ defined in $W^{-\frac1p, p}(\partial \Omega)$,
\begin{equation}\nonumber
\forall \ \varphi \in W^{1, p^{\prime}}(\Omega), \ \ \ <\Bv\cdot \Bn,\ \varphi >_{\partial \Omega}
= \int_{\Omega}  \Bv \cdot \nabla \varphi \, dx + \int_{\Omega} {\rm div }~\Bv \cdot \varphi \, dx.
\end{equation}
And it holds that
\begin{equation}\label{normal}
\|\Bv \cdot \Bn \|_{W^{-\frac1p, p}(\partial \Omega)} \leq C \left(  \|\Bv\|_{L^p(\Omega)} + \| {\rm div}~\Bv\|_{L^p(\Omega)}           \right).
\end{equation}

For every function $\Bv\in L^p( {\rm curl}; \Omega)$, we denote $\Bv\times \Bn$ the tangential boundary value of $\Bv$ defined in $W^{-\frac1p, p}(\partial \Omega)$,
\begin{equation}\nonumber
\forall \ \boldsymbol{\varphi} \in W^{1, p^{\prime}}(\Omega), \ \ \ <\Bv\times \Bn,\ \boldsymbol{\varphi} >_{\partial \Omega}
= \int_{\Omega}  \Bv \cdot {\rm curl }~ \boldsymbol{\varphi} \, dx -  \int_{\Omega} {\rm curl }~\Bv \cdot \boldsymbol{\varphi} \, dx.
\end{equation}
And it holds that
\begin{equation}\label{tangential}
\|\Bv \times \Bn \|_{W^{-\frac1p, p}(\partial \Omega)} \leq C \left(  \|\Bv\|_{L^p(\Omega)} + \| {\rm curl}~\Bv\|_{L^p(\Omega)}           \right).
\end{equation}

\vspace{5mm}
Next, we will list two theorems for further use. Both theorems are Friedrichs' inequalities for vector fields. The first inequality gives  the estimate of $\nabla \Bv$ by ${\rm div}~\Bv$, ${\rm curl}~\Bv$ and $\Bv \cdot \Bn$.  Define the space
\begin{equation} \nonumber
X^{m, p}(\Omega)= \left\{ \Bv \in L^p(\Omega):\ {\rm div}~\Bv\in W^{m-1, p}(\Omega), {\rm curl}~\Bv \in W^{m-1, p}(\Omega),
\Bv\cdot \Bn \in W^{m-\frac1p, p}(\partial \Omega)              \right\}.
\end{equation}

\begin{theorem}\label{lemma1}Let $m \in \mathbb{N}$ and $\Omega$ be a bounded domain of class $C^{m+ 1}$. Then the space $X^{m, p}(\Omega)$ is continuously imbedded in $W^{m, p}(\Omega)$, and
for any $\Bv\in X^{m, p}(\Omega)$, we have the following estimate:
\begin{equation}\label{lemma1-1}
\|\Bv\|_{W^{m, p}(\Omega) }\leq C \left(  \|\Bv\|_{L^p(\Omega)} + \|{\rm curl}~\Bv\|_{W^{m-1, p}(\Omega)}+ \|{\rm div}~\Bv\|_{W^{m-1, p}(\Omega)}
+ \|\Bv\cdot \Bn \|_{W^{m- \frac1p, p}(\partial \Omega)}         \right),
\end{equation}
where $C$ depends on $\Omega, m, p$.
\end{theorem}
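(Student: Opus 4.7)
The plan is to establish the estimate by combining a Helmholtz-type decomposition of $\Bv$ with standard $L^p$ elliptic regularity for scalar Poisson problems, bootstrapping from order $m-1$ to order $m$ by induction. Throughout $C$ denotes a generic constant depending only on $\Omega$, $m$, and $p$. The first step is to remove the normal trace: since $\partial\Omega$ is $C^{m+1}$, the trace map $\Bv\mapsto \Bv\cdot\Bn$ from $W^{m,p}(\Omega)$ onto $W^{m-1/p,p}(\partial\Omega)$ admits a continuous right inverse, so I pick $\Bv_0\in W^{m,p}(\Omega)$ with $\Bv_0\cdot\Bn = \Bv\cdot\Bn$ on $\partial\Omega$ and $\|\Bv_0\|_{W^{m,p}}\leq C\|\Bv\cdot\Bn\|_{W^{m-1/p,p}(\partial\Omega)}$, thereby reducing the problem to estimating $\Bw = \Bv-\Bv_0$ under the additional condition $\Bw\cdot\Bn = 0$ on $\partial\Omega$.

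Next, I would Helmholtz-decompose $\Bw = \nabla\phi + \Bz$, where $\phi$ is the mean-zero solution of the Neumann problem $\Delta\phi = \div \Bw$ in $\Omega$, $\partial_{\Bn}\phi = 0$ on $\partial\Omega$, whose compatibility $\int_\Omega \div \Bw\,dx = 0$ holds because $\Bw\cdot\Bn = 0$. The classical $W^{m+1,p}$-regularity of the scalar Neumann Laplacian on a $C^{m+1}$ domain then gives $\|\nabla\phi\|_{W^{m,p}}\leq C\|\div \Bw\|_{W^{m-1,p}}$. The remainder $\Bz = \Bw-\nabla\phi$ is divergence-free, has vanishing normal trace, and satisfies $\curl\Bz = \curl\Bw$. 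Using $-\Delta\Bz = \curl(\curl\Bz) - \nabla(\div\Bz) = \curl(\curl\Bw)$ together with the boundary conditions $\Bn\cdot\Bz = 0$ and $\Bn\times\curl\Bz = \Bn\times\curl\Bw$ on $\partial\Omega$, $\Bz$ solves a vector Poisson problem whose boundary operator is elliptic of Agmon-Douglis-Nirenberg type. The associated $L^p$-regularity theorem then yields $\|\Bz\|_{W^{m,p}}\leq C(\|\Bz\|_{L^p}+\|\curl\Bw\|_{W^{m-1,p}})$. Combining this with the bound for $\nabla\phi$, controlling $\|\Bz\|_{L^p}$ by $\|\Bw\|_{L^p}+\|\nabla\phi\|_{L^p}$, and adding back the lift $\Bv_0$ produces (\ref{lemma1-1}).

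To keep the argument self-contained I would run an induction on $m$: the base case $m=1$ is the classical vector-field Friedrichs inequality (cf.\ Girault--Raviart for $p=2$ and Amrouche--Bernardi--Dauge--Girault for general $p\in(1,\infty)$), while the inductive step applies the Helmholtz splitting above together with the induction hypothesis applied to tangential derivatives of $\Bw$ plus a local flattening near $\partial\Omega$ to handle the normal derivative. The main obstacle I anticipate is the vector-Laplace estimate for $\Bz$: one must verify that $\{\Bn\cdot\Bz,\;\Bn\times\curl\Bz\}$ forms a complementing Lopatinski--Shapiro boundary operator for the vector Laplacian and invoke the ADN $L^p$ theorem, which is precisely where the $C^{m+1}$ smoothness of $\partial\Omega$ is genuinely required. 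A cleaner route that avoids ADN systems is to introduce a vector potential $\boldsymbol{A}$ with $\curl\boldsymbol{A} = \Bz$, $\div\boldsymbol{A} = 0$, and $\boldsymbol{A}\times\Bn = 0$ on $\partial\Omega$ (which exists because $\div\Bz = 0$ and $\Bz\cdot\Bn = 0$), and then apply scalar Poisson regularity component by component to $\boldsymbol{A}$; I would favor that route since it reduces each nontrivial step to a genuinely scalar problem.
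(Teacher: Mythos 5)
The paper itself offers no proof of this theorem: it is quoted from Amrouche--Seloula [AS] and Kozono--Yanagisawa [KY] ("Both theorems have been proved in [AS, KY], so we omit the proof here"), so there is no internal argument to compare against, and your sketch does follow the same general strategy as that literature (lift the normal trace, Helmholtz splitting, reduction to an elliptic boundary value problem for the vector Laplacian with boundary operators $\Bv\cdot\Bn$ and $\Bn\times\curl\Bv$). As a proof, however, it has genuine gaps. The entire analytic content is the a priori $W^{m,p}$ estimate for the system $-\Delta \boldsymbol{z}=\curl\curl\boldsymbol{w}$, $\boldsymbol{z}\cdot\Bn=0$, $\Bn\times\curl\boldsymbol{z}=\Bn\times\curl\boldsymbol{w}$; you correctly identify the Lopatinski--Shapiro/ADN verification as the crux but do not carry it out. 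Moreover, at the base level $m=1$ the data $\curl\curl\boldsymbol{w}\in W^{-1,p}(\Omega)$ and $\Bn\times\curl\boldsymbol{w}\in W^{-1/p,p}(\partial\Omega)$ are too rough for the standard ADN theorem, so the induction must bottom out elsewhere; but the base-case references you name (Girault--Raviart, ABDG) are Hilbertian ($p=2$), and for general $p\in(1,\infty)$ the $m=1$ case is precisely the content of [AS, KY] (and von Wahl). As written, your induction is therefore either circular or collapses, like the paper, to a citation of the result being proved.

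The ``cleaner route'' via a vector potential is not sound as stated. A potential $\boldsymbol{A}$ with $\curl\boldsymbol{A}=\boldsymbol{z}$, $\div\boldsymbol{A}=0$ and $\boldsymbol{A}\times\Bn=0$ exists only under flux compatibility conditions: if $\boldsymbol{A}\times\Bn=0$ then for any surface $\Sigma\subset\Omega$ with $\partial\Sigma\subset\partial\Omega$ Stokes' theorem gives $\int_{\Sigma}\boldsymbol{z}\cdot\Bn\,d\sigma=\oint_{\partial\Sigma}\boldsymbol{A}\cdot d\boldsymbol{\ell}=0$, and this fails for harmonic fields in a multiply connected domain (e.g.\ the azimuthal field in a solid torus, which is divergence-free, curl-free and tangential, yet has nonzero flux through a meridian disk), whereas the theorem as stated holds for such domains because the finite-dimensional space of harmonic fields is absorbed by the $\|\Bv\|_{L^p(\Omega)}$ term. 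Even when $\boldsymbol{A}$ exists, the boundary conditions $\boldsymbol{A}\times\Bn=0$ and $\div\boldsymbol{A}=0$ couple the components on a curved boundary, so ``scalar Poisson regularity component by component'' does not apply; one is back to an ADN system, and the $L^p$ existence/regularity theory for such potentials is itself established in [AS] using essentially the inequality you are trying to prove. Your primary route can be completed, but only by actually verifying the complementing condition for the $(\div,\curl,\Bv\cdot\Bn)$ system (or citing Bolik--von Wahl, [KY] or [AS] for it), which is exactly the step the paper delegates to the references.
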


The second inequality gives the estimate of $\nabla \Bv$ by ${\rm div}~\Bv$, ${\rm curl}~\Bv$ and $\Bv \times \Bn$.
Define the space
\begin{equation}\nonumber
Y^{m, p}(\Omega) = \left\{ \Bv\in L^p(\Omega):\ {\rm div}~\Bv \in W^{m-1, p}(\Omega), \ {\rm curl}~\Bv \in W^{m-1, p}(\Omega), \
\Bv\times \Bn \in W^{m-\frac1p, p}(\partial \Omega)     \right\}.
\end{equation}

\begin{theorem}\label{lemma2}Let $m\in \mathbb{N}$ and $\Omega$ be a bounded domain of class $C^{m, 1}$. Then the space $Y^{m, p}(\Omega)$ is continuously imbedded in $W^{m, p}(\Omega)$, and
for any $\Bv\in Y^{m, p}(\Omega)$, we have the following estimate:
\begin{equation}\label{lemma2-1}
\|\Bv\|_{W^{m, p}(\Omega) }\leq C \left(  \|\Bv\|_{L^p(\Omega)} + \|{\rm curl}~\Bv\|_{W^{m-1, p}(\Omega)}+ \|{\rm div}~\Bv\|_{W^{m-1, p}(\Omega)}
+ \|\Bv\times \Bn \|_{W^{m- \frac1p, p}(\partial \Omega)}         \right),
\end{equation}
where $C$ depends on $\Omega, m, p$.
\end{theorem}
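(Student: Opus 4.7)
The plan is to establish Theorem 2.2 by a Helmholtz-type decomposition of $\Bv$ combined with classical $L^p$ elliptic regularity, bootstrapped by induction on $m$; the continuous embedding $Y^{m,p}(\Omega) \hookrightarrow W^{m,p}(\Omega)$ is then an immediate corollary of the a priori estimate (2.5), so the whole content of the theorem is the inequality.

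For the base case $m = 1$, I would decompose $\Bv = \nabla \phi + \boldsymbol{u}$, where $\phi$ solves the Dirichlet problem $\Delta \phi = \div \Bv$ in $\Omega$ with $\phi = 0$ on $\partial \Omega$. Because $\phi$ vanishes on the boundary, its tangential gradient vanishes there, so $\nabla \phi \times \Bn = 0$; hence the remainder $\boldsymbol{u}$ satisfies $\div \boldsymbol{u} = 0$, $\curl \boldsymbol{u} = \curl \Bv$, and $\boldsymbol{u} \times \Bn = \Bv \times \Bn$ on $\partial \Omega$. Standard Calderón–Zygmund theory for the Dirichlet Laplacian yields $\|\phi\|_{W^{2,p}(\Omega)} \leq C \|\div \Bv\|_{L^p(\Omega)}$. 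For $\boldsymbol{u}$, I would introduce a divergence-free vector potential $\boldsymbol{\psi}$ with $\boldsymbol{u} = \curl \boldsymbol{\psi}$ and $\boldsymbol{\psi} \cdot \Bn = 0$ on $\partial \Omega$, so that $\boldsymbol{\psi}$ solves a vector Poisson system $-\Delta \boldsymbol{\psi} = \curl \Bv$ with mixed boundary data encoding $\Bv \times \Bn$. Agmon–Douglis–Nirenberg regularity for this elliptic system then gives $\|\boldsymbol{u}\|_{W^{1,p}(\Omega)} \leq C(\|\Bv\|_{L^p(\Omega)} + \|\curl \Bv\|_{L^p(\Omega)} + \|\Bv \times \Bn\|_{W^{1-1/p,p}(\partial \Omega)})$; adding the two contributions yields the case $m=1$.

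For the inductive step $m \geq 2$, I would bootstrap the same decomposition using the sharper regularity statements $\|\phi\|_{W^{m+1,p}(\Omega)} \leq C \|\div \Bv\|_{W^{m-1,p}(\Omega)}$ and the analogous $W^{m+1,p}$ bound on $\boldsymbol{\psi}$; both are legitimate because $\partial \Omega \in C^{m,1}$ as assumed. Near the boundary one localizes with a smooth partition of unity and flattens via a $C^{m,1}$ change of variables, reducing matters to half-space estimates where tangential and normal derivatives separate cleanly and the trace $\Bv \times \Bn$ enters through its $W^{m-1/p, p}(\partial \Omega)$ norm in the expected way. The commutators of $\div$ and $\curl$ with the coordinate change produce only lower-order terms absorbed by the inductive hypothesis.

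The hardest step, and the main obstacle, is the boundary analysis of the vector potential system for $\boldsymbol{\psi}$: one must verify the Lopatinski–Shapiro complementing condition for the mixed boundary data $(\boldsymbol{\psi} \cdot \Bn,\, (\curl \boldsymbol{\psi}) \times \Bn)$ and check that the finite-dimensional kernel (harmonic Neumann fields) is controlled by the $\|\Bv\|_{L^p(\Omega)}$ term already appearing on the right-hand side. Once this ellipticity is established, everything else—the scalar Dirichlet regularity, the inductive differentiation, and the trace theory—follows from standard results, and the induction on $m$ closes routinely.
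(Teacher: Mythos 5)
Your blueprint (a Dirichlet corrector for the divergence, a vector potential for the divergence-free remainder, ADN elliptic theory, then induction on $m$) cannot really be compared with ``the paper's proof,'' because the paper does not prove Theorem \ref{lemma2} at all: it is quoted from Amrouche--Seloula \cite{AS} and Kozono--Yanagisawa \cite{KY}. Measured against those references, your sketch has genuine gaps rather than deferred routine steps. First, the representation $\boldsymbol{u}=\curl\boldsymbol{\psi}$ with $\div\,\boldsymbol{\psi}=0$ and $\boldsymbol{\psi}\cdot\Bn=0$ does not exist for an arbitrary divergence-free field on a general bounded $C^{m,1}$ domain: it requires the flux of $\boldsymbol{u}$ through each connected component of $\partial\Omega$ to vanish, and $\boldsymbol{u}=\Bv-\nabla\phi$ has no reason to satisfy this when $\partial\Omega$ is not connected (nor is the topology of $\Omega$ restricted in the statement). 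One must correct by a finite-dimensional space of harmonic fields, and the $L^p$ existence and estimates for such potentials are themselves among the main results of \cite{AS}; at this point you are invoking machinery essentially as deep as the inequality you want to prove. Second, the step you yourself single out as the main obstacle --- verifying the Lopatinski--Shapiro complementing condition for the system $-\Delta\boldsymbol{\psi}=\curl\Bv$ with boundary data $\boldsymbol{\psi}\cdot\Bn=0$, $\div\,\boldsymbol{\psi}=0$, $(\curl\boldsymbol{\psi})\times\Bn=\Bv\times\Bn$, together with the control of the finite-dimensional kernel, and extracting the $W^{2,p}$ bound with the trace norm $\|\Bv\times\Bn\|_{W^{1-\frac1p,p}(\partial\Omega)}$ --- is exactly where the substance of \eqref{lemma2-1} lies; without it even the case $m=1$ is not established.

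A further gap: the continuous embedding $Y^{m,p}(\Omega)\hookrightarrow W^{m,p}(\Omega)$ is not an ``immediate corollary'' of the a priori bound. Your derivation applies elliptic regularity and trace theorems to a field known only to lie in $L^p$ with $\div$ and $\curl$ in $W^{m-1,p}$ and tangential trace in $W^{m-\frac1p,p}(\partial\Omega)$, so you must either show that smooth fields are dense in $Y^{m,p}(\Omega)$ or run a regularization argument proving that such a $\Bv$ actually possesses $W^{m,p}$ regularity; this membership statement is a nontrivial part of \cite{AS,KY} and is assumed without comment in your opening sentence. The scalar part of your argument (the Dirichlet corrector $\phi$, which indeed preserves the tangential trace, and the localization/flattening scheme for the induction) is sound; the unresolved points are the vector-potential existence, the ellipticity verification, and the density/regularity step, and these are precisely what the cited references are needed for.
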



Both theorems have been proved in \cite{AS, KY}, so we omit the proof here.

\section{$H^m$-Estimates}

In this section, we will give the $H^m$-estimates for $\BE$ and $\nabla \times \BE$. Beforehand, we give the existence result for completeness, which can be found in \cite{Monk}.
\begin{theorem}
Let $\Omega$ be an open bounded domain, which either has a $C^2$-boundary or is a polyhedron. Suppose $\BJ \in L^2(\Omega)$ with ${\rm div}~\BJ = 0$ in $\Omega$,
and $\Bg\in L^2(\partial \Omega)$ with $\Bg \cdot \Bn = 0$ on $\partial \Omega$. Under the assumptions made on the material coefficients in Section 1, there exists one unique weak solution $\BE \in L^2({\rm curl} ; \Omega)$ to \eqref{pde_original}, satisfying ${\rm div}~\BE = 0$ in $\Omega$.
\end{theorem}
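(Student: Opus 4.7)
The plan is to argue via the standard variational formulation of \eqref{pde_original} set on the Hilbert space $H_{imp}(\text{curl};\Omega)$, and then invoke the Fredholm alternative. Multiplying \eqref{pde_original_1} by a test field $\overline{\Bv}$, integrating by parts, and inserting the impedance condition \eqref{BC-PDE} produces the sesquilinear form
\begin{equation*}
a(\BE,\Bv)=\int_\Omega \mu^{-1}\nabla\times\BE\cdot\overline{\nabla\times\Bv}\,dx-\omega^2\int_\Omega \epsilon\,\BE\cdot\overline{\Bv}\,dx-i\omega\lambda\int_{\partial\Omega}\BE_T\cdot\overline{\Bv_T}\,dS,
\end{equation*}
with right-hand side $\ell(\Bv)=\int_\Omega \BJ\cdot\overline{\Bv}\,dx+\int_{\partial\Omega}\Bg\cdot\overline{\Bv_T}\,dS$. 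The form $a$ is clearly bounded on $H_{imp}(\text{curl};\Omega)$, and the shifted form $a(\BE,\Bv)+(\omega^2+1)\int_\Omega\epsilon\BE\cdot\overline{\Bv}\,dx$ is coercive, so $a$ satisfies a G\aa{}rding inequality. In other words, the problem can be written as $(A+K)\BE=F$ with $A$ an isomorphism and $K$ compact, provided the embedding of the subspace of divergence-free fields in $H_{imp}(\text{curl};\Omega)$ into $L^2(\Omega)$ is compact; for a $C^2$ domain or a Lipschitz polyhedron this is Weber's compactness result.

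With G\aa{}rding plus compactness in hand, the Fredholm alternative reduces existence to uniqueness. For uniqueness I would take $\BJ=0$, $\Bg=0$, and test with $\Bv=\BE$ in the variational identity to obtain
\begin{equation*}
\int_\Omega \mu^{-1}|\nabla\times\BE|^2\,dx-\omega^2\int_\Omega \epsilon|\BE|^2\,dx-i\omega\lambda\int_{\partial\Omega}|\BE_T|^2\,dS=0.
\end{equation*}
Taking imaginary parts and using $\omega\lambda\neq 0$ gives $\BE_T=0$ on $\partial\Omega$, whence the impedance relation \eqref{BC-PDE} also forces $(\mu^{-1}\nabla\times\BE)\times\Bn=0$ on $\partial\Omega$. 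Extending $\BE$ by zero to a ball containing $\overline{\Omega}$ produces a global weak solution of the homogeneous Maxwell system, and the standard unique continuation principle for second order elliptic systems with real analytic coefficients then yields $\BE\equiv 0$ in $\Omega$.

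Finally, to verify $\text{div}~\BE=0$, I would use gradients as test fields: for $\varphi\in H^1_0(\Omega)$, the field $\Bv=\nabla\varphi$ lies in $H_{imp}(\text{curl};\Omega)$ with $\nabla\times\Bv=0$ and $\Bv_T=0$ on $\partial\Omega$. Substituting into the variational identity gives
\begin{equation*}
-\omega^2\int_\Omega \epsilon\,\BE\cdot\nabla\overline{\varphi}\,dx=\int_\Omega \BJ\cdot\nabla\overline{\varphi}\,dx,
\end{equation*}
and since $\text{div}~\BJ=0$ in $\Omega$ the right-hand side vanishes, so $\text{div}(\epsilon\BE)=0$ in the sense of distributions, hence $\text{div}~\BE=0$ because $\epsilon$ is a positive constant. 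The anticipated main obstacle is the unique continuation step, since one must either quote a Maxwell-specific unique continuation theorem or carefully recast the extended equations as an elliptic system with $L^\infty$ coefficients so that the classical Aronszajn--Cordes results apply; in a polyhedral domain additional care is needed near edges and corners when performing the zero extension.
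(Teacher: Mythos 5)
You should note first that the paper does not prove this theorem at all: it is stated ``for completeness'' and quoted from Theorem 4.17 of Monk \cite{Monk}, so there is no internal proof to compare against. Your outline essentially reconstructs that standard argument---variational formulation on $H_{imp}({\rm curl};\Omega)$, compact perturbation of a coercive problem plus Fredholm alternative, uniqueness from the energy identity (imaginary part gives $\BE_T=0$, hence $(\mu^{-1}\nabla\times\BE)\times\Bn=0$), zero extension and unique continuation, and the divergence-free property by testing with $\nabla\varphi$, $\varphi\in H^1_0(\Omega)$---and it is correct in outline. Two steps need tightening. First, the G\aa{}rding inequality on all of $H_{imp}({\rm curl};\Omega)$ does not by itself yield the splitting $(A+K)\BE=F$ with $K$ compact, because $H_{imp}({\rm curl};\Omega)$ does not embed compactly into $L^2(\Omega)$ (gradients of $H^1_0(\Omega)$ functions are curl-free with zero tangential trace and obstruct compactness); one must first perform the Helmholtz decomposition $\BE=\BE_0+\nabla p$, $p\in H^1_0(\Omega)$, check that testing with gradients decouples (and, since ${\rm div}~\BJ=0$, annihilates) the gradient component, and run the Fredholm argument only on the divergence-free subspace---your final paragraph already contains the computation needed for this reduction, but it must come before, not after, the Fredholm step. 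Second, the compactness you invoke is not literally Weber's theorem (which concerns the boundary conditions $\Bv\times\Bn=0$ or $\Bv\cdot\Bn=0$); for the impedance space one uses the $H^{1/2}(\Omega)$ regularity of fields in $H({\rm curl};\Omega)\cap H({\rm div};\Omega)$ with $\Bv_T\in L^2(\partial\Omega)$ on Lipschitz domains, which gives the compact embedding of the divergence-free subspace (Monk, Theorem 4.7). Finally, your worry about unique continuation is unnecessary here: since the coefficients are constants, the zero-extended field satisfies the constant-coefficient vector Helmholtz equation in a ball, is therefore real-analytic, and vanishes on an open set, so $\BE\equiv 0$ follows without any Maxwell-specific unique continuation theorem; the zero extension itself is unproblematic on Lipschitz (in particular polyhedral) boundaries once $\BE\times\Bn=0$ and $(\nabla\times\BE)\times\Bn=0$.
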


Now let us talk about the high-order regularity estimate of $\BE$. The first result deals with the $H^1$-estimate.

\begin{theorem}\label{theorem1}
Let $\Omega \subset \mathbb{R}^3$ be a bounded $C^2$-domain and star-shaped with respect to $B_{\gamma}(x_0)$. In addition to the assumptions made on $\BJ, \Bg$ and on the material coefficients in Section 1, we assume that
$\BJ \in L^2(\Omega)$ and
$\Bg\in H^{\frac12}(\partial \Omega)$. Then if $\BE$ is the solution to \eqref{pde_original},  $\BE$ belongs to the space $H^1({\bf curl}; \Omega),$ and there exists one constant $C$ independent of $\omega$, but depending on $\Omega$, $\lambda$, $\epsilon$, $\mu$, such that, 
\begin{equation}\label{3-1}
\|\mu^{-1} \nabla \times \BE\|_{H^1(\Omega)} + \omega \| \BE\|_{H^1(\Omega)} \leq C (1+ \omega) \left( \|\BJ \|_{L^2(\Omega)} + \|\Bg\|_{L^2(\partial \Omega)}\right) + C \|\Bg\|_{H^{\frac12}(\partial \Omega)}.
\end{equation}

\end{theorem}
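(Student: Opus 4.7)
The plan is to follow the strategy of Hiptmair, Moiola and Perugia as the introduction makes clear: Theorem 3.2 is essentially their $H^1$-estimate rewritten with the $\mu^{-1}$ weight (harmless since $\mu$ is a positive constant), and the Friedrichs-type inequalities of Theorems 2.1 and 2.2 are kept aside for the higher-order results in the later theorems of Section 3. The proof therefore rests on two integration-by-parts identities for the strong solution $\BE$, combined with the star-shaped geometry encoded in $\alpha\cdot\Bn\geq\gamma>0$ on $\partial\Omega$, where $\alpha(x)=x-x_0$.

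The first step is an energy identity: test \eqref{pde_original_1} against $\bar\BE$, apply Green's formula for the curl-curl operator, and substitute the impedance BC \eqref{BC-PDE} into the resulting boundary integral. Taking the imaginary part eliminates the real-valued quantities $\int\mu^{-1}|\nabla\times\BE|^2 - \omega^2\int\epsilon|\BE|^2$, leaving a bound of the form
\begin{equation*}
\omega\|\BE_T\|_{L^2(\partial\Omega)}^2\leq C\bigl(\|\BJ\|_{L^2(\Omega)}\|\BE\|_{L^2(\Omega)}+\|\Bg\|_{L^2(\partial\Omega)}\|\BE_T\|_{L^2(\partial\Omega)}\bigr).
\end{equation*}
The second step is a Rellich-type identity: test \eqref{pde_original_1} against $(\alpha\cdot\nabla)\bar\BE$ and integrate by parts twice in the curl-curl term. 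The star-shaped hypothesis makes the principal boundary term with weight $\alpha\cdot\Bn$ of favourable sign, and the interior contributions assemble into $\|\nabla\times\BE\|_{L^2}^2$, $\omega^2\|\BE\|_{L^2}^2$ and $\|\nabla\BE\|_{L^2}^2$ with controllable signs. After taking the real part and combining with the first identity using suitably chosen weights, the indefinite cross terms cancel and the combined inequality delivers both the $L^2$-bound $\omega\|\BE\|_{L^2}+\|\nabla\times\BE\|_{L^2}\leq C(1+\omega)(\|\BJ\|_{L^2}+\|\Bg\|_{L^2(\partial\Omega)})$ and the $\omega\|\BE\|_{H^1}$ bound in \eqref{3-1}. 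The corresponding $H^1$-bound for $\mu^{-1}\nabla\times\BE$ follows by running the same argument on the equivalent system satisfied by $\BH=(i\omega\mu)^{-1}\nabla\times\BE$, whose impedance-type BC comes from \eqref{Maxwell}.

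The main obstacle is the Rellich computation itself: bookkeeping of the many boundary terms produced by the two integrations by parts, and checking that each one is either of favourable sign (from $\alpha\cdot\Bn\geq\gamma$), absorbable by the left-hand side, or controlled by the data. The trickiest term is the one in which $\Bg$ is paired against $(\alpha\cdot\nabla\bar\BE)_T$; since $(\alpha\cdot\nabla\bar\BE)_T$ carries a tangential derivative of $\BE_T$, estimating this pairing by surface duality forces the $H^{\frac12}(\partial\Omega)$-norm of $\Bg$ to appear on the right-hand side, which is precisely the source of the last term in \eqref{3-1}.
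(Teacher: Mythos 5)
Your plan conflates two different steps. The multiplier/Rellich argument with $\alpha=x-x_0$ and the test function $(\alpha\cdot\nabla)\bar\BE$ is exactly how the $L^2$-stability bound is obtained (this is the content of the Hiptmair--Moiola--Perugia result that the paper simply quotes as Theorem \ref{HM}), but it cannot, by itself, deliver the $H^1$-norms in \eqref{3-1}. When you integrate the curl-curl term by parts against $(\alpha\cdot\nabla)\bar\BE$, the identity $\nabla\times\bigl((\alpha\cdot\nabla)\BE\bigr)=(\alpha\cdot\nabla)(\nabla\times\BE)+\nabla\times\BE$ shows that every interior contribution involves only $|\nabla\times\BE|^2$ and $\omega^2|\BE|^2$, plus boundary terms weighted by $\alpha\cdot\Bn$; the full gradient $\|\nabla\BE\|_{L^2(\Omega)}$ never appears, because the curl-curl operator is not elliptic. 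Passing from $L^2$-bounds on $\BE$ and $\nabla\times\BE$ to $\omega\|\BE\|_{H^1}$ and $\|\mu^{-1}\nabla\times\BE\|_{H^1}$ requires using ${\rm div}\,\BE=0$ together with boundary trace information, i.e.\ precisely the Friedrichs-type Theorems \ref{lemma1} and \ref{lemma2} that you set aside ``for the higher-order results,'' or equivalently the route the paper actually takes: split $\BE=\BP^0+\nabla\psi$, apply Theorem \ref{lemma1} to $\BP^0$ (div, curl and normal trace known), rewrite the impedance condition as \eqref{3-6} to control ${\rm div}_T\nabla_T\psi$ in $H^{-1/2}(\partial\Omega)$, lift via the Laplace--Beltrami operator and Dirichlet regularity to get $\omega\|\psi\|_{H^2(\Omega)}$, and finally apply Theorem \ref{lemma2} to the system \eqref{3-15} for $\mu^{-1}\nabla\times\BE$. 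This is where the precise powers of $\omega$ and the single $\|\Bg\|_{H^{1/2}(\partial\Omega)}$ term in \eqref{3-1} come from.

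There is also a circularity in the one place where you do invoke $\|\Bg\|_{H^{1/2}(\partial\Omega)}$: estimating the pairing of $\Bg$ with $(\alpha\cdot\nabla\bar\BE)_T$ by $H^{1/2}$--$H^{-1/2}$ duality bounds it by $\|\Bg\|_{H^{1/2}(\partial\Omega)}\|\BE_T\|_{H^{1/2}(\partial\Omega)}$, and $\|\BE_T\|_{H^{1/2}(\partial\Omega)}$ is controlled only by the $H^1(\Omega)$-norm of $\BE$ that you are trying to prove; since the left-hand side produced by the Rellich identity contains no interior gradient term, there is nothing to absorb it into. So the proposal as written has a genuine gap: it reproves (in outline) the already-quoted stability estimate, but the actual content of Theorem \ref{theorem1} --- the promotion from $L^2({\rm curl})$ to $H^1({\rm curl})$ with the stated $\omega$-dependence --- is missing and needs the div-curl-trace machinery of Section 2.
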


\begin{remark}
Theorem \ref{theorem1} has been proved in \cite{HM}. Since the proof for high regularity estimates are in the same spirit as that for the $H^1$-estimate, we report the proof in detail. The proof here is slightly different from that in \cite{HM}, since we give a simpler proof for the estimate of $\nabla \times E$.
\end{remark}

The proof of Theorem \ref{theorem1} is based on the stability result derived by Hiptmair-Moiola-Perugia\cite{HM}. So first we list the stability result without proof.

\begin{theorem}\label{HM}
Let $\Omega \subset \mathbb{R}^3$ be a bounded $C^2$-domain which is star-shaped with respect to $B_{\gamma}(x_0)$. Suppose $\BJ \in L^2(\Omega)$ with ${\rm div}~\BJ = 0$ in $\Omega$, and  $\Bg \in L^2(\partial \Omega)$ with $\Bg\cdot \Bn =0$ on $\partial \Omega$. Under the assumptions made on  the material coefficients in Section 1,  there exist two positive constants $C$ independent of $\omega$, but depending on $d:= diam (\Omega)$, $\lambda, \epsilon$ and $\mu$, such that, if $\BE$ is the solution to \eqref{pde_original},
\begin{equation}\label{stability}
\|\mu^{-\frac12} \nabla \times \BE\|_{L^2(\Omega)} + \omega \|\epsilon^{\frac12} \BE\|_{L^2(\Omega)}
\leq C \left(\|\BJ \|_{L^2(\Omega)} + \|\Bg\|_{L^2(\partial \Omega)}\right).
\end{equation}
\end{theorem}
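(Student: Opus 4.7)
The plan is to prove the wave-explicit $L^2$ stability \eqref{stability} via the Rellich--Morawetz multiplier method introduced by Hiptmair--Moiola--Perugia, combined with the standard energy identity arising from testing with the solution itself. The star-shapedness hypothesis will be exploited through the multiplier $\Bm(x) = x - x_0$, which satisfies $\Bm \cdot \Bn \geq \gamma > 0$ on $\partial \Omega$.

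First I would derive the \emph{energy identity}: multiply \eqref{pde_original_1} by $\bar{\BE}$, integrate over $\Omega$, integrate by parts the $\nabla \times (\mu^{-1} \nabla \times \BE)$ term, and substitute the impedance boundary condition \eqref{BC-PDE} to obtain
\begin{equation*}
\int_\Omega \mu^{-1}|\nabla\times\BE|^2 \, dx - \omega^2 \int_\Omega \epsilon|\BE|^2 \, dx - i\omega\lambda \int_{\partial\Omega} |\BE_T|^2 \, ds = \int_\Omega \BJ\cdot\bar{\BE}\, dx + \int_{\partial\Omega} \Bg\cdot\bar{\BE}_T\, ds.
\end{equation*}
Taking imaginary parts and using $\lambda \neq 0$ controls $\omega\|\BE_T\|_{L^2(\partial\Omega)}^2$ by the right-hand side plus cross terms $\|\BJ\|_{L^2(\Omega)}\|\BE\|_{L^2(\Omega)}$ and $\|\Bg\|_{L^2(\partial\Omega)}\|\BE_T\|_{L^2(\partial\Omega)}$, to be absorbed later by Young's inequality.

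Next I would apply the Rellich-type identity, testing \eqref{pde_original_1} against the multiplier $\overline{(\Bm\cdot\nabla)\BE}$ and taking the real part:
\begin{equation*}
2\,\mathrm{Re}\int_\Omega \big[\nabla\times(\mu^{-1}\nabla\times\BE) - \omega^2\epsilon\BE\big]\cdot\overline{(\Bm\cdot\nabla)\BE}\, dx = 2\,\mathrm{Re}\int_\Omega \BJ\cdot \overline{(\Bm\cdot\nabla)\BE}\, dx.
\end{equation*}
Using the vector-calculus identities for $\nabla\times(\nabla\times\cdot)$ and $(\Bm\cdot\nabla)$, together with $\div\BE = 0$ in $\Omega$ (a consequence of $\div\BJ=0$ and \eqref{pde_original_1}) and $\div\BE = \omega^{-2}\epsilon^{-1}(\div\BJ - \div(\nabla\times(\mu^{-1}\nabla\times\BE))) = 0$, the left-hand side reduces, after integration by parts, to a positive bulk contribution of the form $\int_\Omega \mu^{-1}|\nabla\times\BE|^2 + \omega^2\epsilon|\BE|^2\, dx$ (coming from the factor $\div\Bm = 3$ in $\mathbb{R}^3$ and collecting constants $c_1, c_2 > 0$) plus boundary terms involving $(\Bm\cdot\Bn)\mu^{-1}|\nabla\times\BE|^2$, $(\Bm\cdot\Bn)\omega^2\epsilon|\BE|^2$, and cross terms between tangential components of $\BE$ and $\nabla\times\BE$ on $\partial\Omega$.

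The third step is to dispose of the boundary contribution. Using \eqref{BC-PDE} to replace $(\mu^{-1}\nabla\times\BE)\times\Bn = \Bg + i\omega\lambda\BE_T$, every boundary term becomes bounded by $\|\Bg\|_{L^2(\partial\Omega)}^2 + \omega^2\|\BE_T\|_{L^2(\partial\Omega)}^2$, and the positivity $\Bm\cdot\Bn \geq \gamma$ ensures the dangerous tangential components of $\nabla\times\BE$ appear with a controllable sign. The right-hand side $\int_\Omega \BJ\cdot\overline{(\Bm\cdot\nabla)\BE}\, dx$ is handled by an additional integration by parts transferring the derivative onto $\BJ$ (using $\div\BJ = 0$ to eliminate the extra divergence), yielding a bound of the form $C\|\BJ\|_{L^2(\Omega)}\|\BE\|_{L^2(\Omega)}$. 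Finally I would combine the Rellich identity with the energy identity from Step 1 using the already-controlled $\|\BE_T\|_{L^2(\partial\Omega)}^2$ and Young's inequality with a small parameter to absorb $\omega\|\epsilon^{1/2}\BE\|_{L^2(\Omega)}^2$ and $\|\mu^{-1/2}\nabla\times\BE\|_{L^2(\Omega)}^2$ into the left, yielding \eqref{stability}.

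The main obstacle is the boundary analysis in the Rellich step: one must carefully split $\nabla\times\BE$ at $\partial\Omega$ into normal and tangential parts, recognize which terms carry the favorable sign from $\Bm\cdot\Bn\geq\gamma$, and couple only the remaining traces to the impedance condition. Getting the numerical coefficients to align so that the bulk terms survive with a strictly positive coefficient, independent of $\omega$, is the delicate computation that distinguishes this wave-number-explicit estimate from a soft compactness argument.
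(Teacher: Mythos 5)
First, a point of comparison: the paper itself offers \emph{no} proof of Theorem \ref{HM}. It is imported verbatim from Hiptmair--Moiola--Perugia \cite{HM} (``we list the stability result without proof''), and everything the paper actually proves (Theorems \ref{theorem1} and \ref{theorem2}) takes \eqref{stability} as its starting point. So your proposal is not an alternative to anything in this paper; it is an attempted reconstruction of the proof in the cited reference, and the strategy you describe --- energy identity from testing with $\bar\BE$, imaginary part to control $\omega\|\BE_T\|_{L^2(\partial\Omega)}^2$, then a Rellich--Morawetz identity with multiplier $\boldsymbol{m}=x-x_0$ and the star-shapedness condition $\boldsymbol{m}\cdot\Bn\ge\gamma$ on $\partial\Omega$ --- is indeed the one used there.

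Two steps, however, do not work as you wrote them. (i) The source term $\int_\Omega\BJ\cdot\overline{(\boldsymbol{m}\cdot\nabla)\BE}\,dx$ cannot be reduced to $C\|\BJ\|_{L^2(\Omega)}\|\BE\|_{L^2(\Omega)}$ by ``integrating by parts to transfer the derivative onto $\BJ$'': that integration by parts produces $\int_\Omega\bigl((\boldsymbol{m}\cdot\nabla)\BJ+(\div\boldsymbol{m})\BJ\bigr)\cdot\bar\BE\,dx$, i.e.\ the full gradient of $\BJ$, which is not available, and $\div\BJ=0$ does not eliminate it. The workable device is the pointwise identity $(\boldsymbol{m}\cdot\nabla)\BE=\nabla(\boldsymbol{m}\cdot\BE)-\BE-\boldsymbol{m}\times(\nabla\times\BE)$ (valid since $\nabla\times\boldsymbol{m}=0$ and $(\BE\cdot\nabla)\boldsymbol{m}=\BE$), after which $\div\BJ=0$ kills the interior part of the gradient contribution, the last term is bounded by $d\,\|\BJ\|_{L^2(\Omega)}\|\nabla\times\BE\|_{L^2(\Omega)}$ and absorbed, and one is left with a boundary pairing $\langle\BJ\cdot\Bn,\overline{\boldsymbol{m}\cdot\BE}\rangle_{\partial\Omega}$ that still requires care, since it involves $\BE\cdot\Bn$ on $\partial\Omega$, which the energy identity does not control. (ii) Your sign bookkeeping in the Rellich step is wrong: with $\div\boldsymbol{m}=3$ the multiplier identity yields the bulk combination $-\mu^{-1}\|\nabla\times\BE\|_{L^2(\Omega)}^2+3\,\omega^2\epsilon\|\BE\|_{L^2(\Omega)}^2$ --- the curl term carries the coefficient $2-\div\boldsymbol{m}=-1$, not a positive constant. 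Both bulk coefficients become positive only after adding a suitable multiple (e.g.\ twice) of the \emph{real} part of the energy identity; that combination is the crux of the wave-explicit estimate, not a final clean-up for cross terms as your last step suggests. You should also indicate how the formal computation is justified: writing $(\boldsymbol{m}\cdot\nabla)\BE$ presupposes $\BE\in H^1(\Omega)$, which is not known a priori for $\BE\in H_{imp}({\rm curl};\Omega)$, and \cite{HM} must pass through a regularity/density argument to legitimize the identities.
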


\vspace{2mm}

\begin{proof}[\bf Proof of Theorem \ref{theorem1}]
First, let us decompose $\BE$ as $$\BE= \BP^0 + \nabla \psi,$$
where $\BP^0$ satisfies
\begin{equation}\label{3-2} \left\{
\begin{array}{l}
{\rm div}~ \BP^0 = 0, \ \ \ \ \ \ \ \ {\rm in} \ \ \Omega,\\[2mm]
{\rm curl }~ \BP^0 = {\rm curl}~\BE,\ \ \ \ \ {\rm in} \ \ \Omega,\\[2mm]
\BP^0 \cdot \Bn = 0,  \ \ \ \ {\rm on} \ \ \partial \Omega,
\end{array} \right. \end{equation}
and $\psi$ satisfies
\begin{equation}\label{3-3} \left\{
\begin{array}{l}
\Delta~ \psi = 0, \ \ \ \ \  {\rm in} \ \ \Omega, \\[2mm]
\displaystyle \frac{\partial \psi}{\partial \Bn } =  \BE\cdot \Bn,\ \ \ \  {\rm on} \ \ \partial \Omega, \\[3mm]
\displaystyle \int_{\Omega} \psi\,  dx = 0.
\end{array}
\right.
\end{equation}
The above decomposition is a classical Helmholtz-Weyl decomposition. According to the orthogonality of the Helmholtz-Weyl decomposition, it holds that
\begin{equation}\nonumber
\left\| \BP^0 \right\|_{L^2(\Omega)} \leq \|\BE\|_{L^2(\Omega)},\ \ \ \mbox{and}\ \ \ \|\nabla \psi \|_{L^2(\Omega)} \leq \|\BE\|_{L^2(\Omega)}.
\end{equation}
By virtue of  Theorem \ref{lemma1},
\begin{equation} \label{3-4}
\left\|\BP^0 \right\|_{H^1(\Omega)} \leq
C \left( \|\nabla \times \BE\|_{L^2(\Omega)} + \left\|\BP^0 \right\|_{L^2(\Omega)} \right) \leq  C\left(  \| \nabla \times \BE\|_{L^2(\Omega)} + \|\BE\|_{L^2(\Omega)} \right).
\end{equation}
On the other hand, using  Poincar\'e's inequality, we  get  that
\begin{equation}\label{3-5}
\| \psi \|_{H^1 (\Omega) } \leq C \|\nabla \psi \|_{L^2(\Omega)} \leq C \|\BE\|_{L^2 (\Omega)}.
\end{equation}

Next, we will improve the regularity of $\psi$. The boundary condition \eqref{BC-PDE} can be rewritten as
\begin{equation}\label{3-6}
i \omega \lambda \nabla_{T} \psi = (\mu^{-1} \nabla \times \BE ) \times \Bn - i \omega \lambda \BP^0_{T} - \Bg,
\end{equation}
where $\nabla_{T}\psi$ is the tangential gradient of $\psi$, i.e., $\nabla_{T} \psi = (\Bn \times \nabla \psi) \times \Bn$.
According to Formula (3.52) in \cite{Monk},
\begin{equation} \label{3-7}
{\rm div}_T~\left[ (\mu^{-1}\nabla \times \BE ) \times {\Bn }  \right]
= - \Bn \cdot \left[ \nabla \times (\mu^{-1} \nabla \times \BE)    \right] =  - \Bn \cdot ( \BJ + \omega^2 \epsilon \BE),
\end{equation}
where ${\rm div}_T$ is the tangential divergence.
Hence it follows from the inequality \eqref{normal} that
\begin{equation}\label{3-8}
\begin{array}{l}
 \left\| {\rm div}_{T}~ [ ( \mu^{-1} \nabla \times \BE ) \times \Bn ]   \right\|_{H^{-\frac12}(\partial \Omega)} \\[3mm]
 \leq C \left[ \left\|\BJ + \omega^2 \epsilon \BE \right\|_{L^2(\Omega)} + \left\|{\rm div}~( \BJ +
\omega^2 \epsilon \BE) \right\|_{L^2(\Omega)}                      \right]\\[3mm]
\leq C \|\BJ\|_{L^2(\Omega)} + C  \omega^2 \|\BE\|_{L^2(\Omega)}.
\end{array}
\end{equation}
Meanwhile, by virtue of \eqref{3-4},
\begin{equation}\label{3-9}
\left \| i \omega \lambda \BP^0_T \right\|_{H^{\frac12}(\partial \Omega)}
\leq C \omega \| \BP^0\|_{H^1(\Omega)} \leq C \omega \left( \|\nabla \times \BE\|_{L^2(\Omega)} + \|\BE\|_{L^2(\Omega)}  \right).
\end{equation}
Combining the estimates \eqref{stability} and \eqref{3-8}-\eqref{3-9}, we have
\begin{equation}\label{3-9-add}
\begin{array}{l}
\omega \left \| {\rm div}_T \nabla_T \psi \right \|_{H^{-\frac12}(\partial \Omega)}\\
[3mm]
\leq  \left \|  {\rm div}_T \left[( \mu^{-1} \nabla \times \BE) \times \Bn \right]\right\|_{H^{-\frac12}(\partial \Omega)} + \|i \omega \lambda \BP^0_T\|_{H^{\frac12}(\partial \Omega)} +  \|\Bg\|_{H^{\frac12}(\partial \Omega)}   \\[3mm]
 \leq  C\| \BJ \|_{L^2(\Omega)}
+ C \omega^2 \|\BE\|_{L^2(\Omega)} + C \omega \left( \| \nabla \times \BE\|_{L^2(\Omega)} + \|\BE\|_{L^2(\Omega) }\right) + \|\Bg\|_{H^{\frac12}(\partial \Omega)} \\[2mm]
\leq C (1+ \omega) \left( \|\BJ\|_{L^2(\Omega)} + \|\Bg\|_{L^2(\partial \Omega)} \right)+ \|\Bg\|_{H^{\frac12}(\partial \Omega)}. 
\end{array}
\end{equation}

Hence, applying the elliptic lifting theorem for the Laplace-Beltrami operator on smooth surfaces, we have
\begin{equation}\label{3-10}
\begin{array}{l}
\omega \| \psi \|_{H^{\frac32}(\partial \Omega)}\\[2mm]
  \leq \displaystyle  C\omega  \left\| {\rm div}_T \nabla_T \psi \right\|_{H^{-\frac12}(\partial \Omega)} +
C \omega \left| \int_{\partial \Omega} \psi \ dS  \right|\\[3mm]
 \leq C \omega \left\| {\rm div}_T~  \nabla_T \psi  \right\|_{H^{-\frac12}(\partial \Omega)} + C \omega \|\psi \|_{H^1(\Omega)}\\[3mm]
\leq C (1+ \omega) \left( \|\BJ \|_{L^2(\Omega)}  +  \|\Bg\|_{L^2(\partial \Omega)}\right) + C \|\Bg\|_{H^{\frac12}(\partial \Omega)},
\end{array}
\end{equation}
where the last inequality is due to \eqref{stability}, \eqref{3-5} and \eqref{3-9-add}.
Then according to the regularity theory for the Dirichlet problem of Laplace equation,
\begin{equation}\label{3-11}
\omega \|\psi\|_{H^2(\Omega)} \leq C  (1 + \omega) \left(\|\BJ \|_{L^2(\Omega)}  + \|\Bg\|_{L^2(\partial \Omega)} \right) + C \|\Bg\|_{H^{\frac12}(\partial \Omega)},
\end{equation}
and consequently,
\begin{equation}\label{3-12}
\omega \|\BE\|_{H^1(\Omega)} \leq C (1+\omega)  \left(\|\BJ \|_{L^2(\Omega)}  + \|\Bg\|_{L^2(\partial \Omega)} \right)+ C \|\Bg\|_{H^{\frac12}(\partial \Omega)}.
\end{equation}

Now  we give the $H^1$-estimate for $\nabla \times \BE$. Since
\begin{equation}\label{3-15}
\left\{ \begin{array}{l}
{\rm div}~( \mu^{-1} \nabla \times \BE) = 0,\ \ \  {\rm in}\ \Omega,\\[2mm]
\nabla \times ( \mu^{-1} \nabla \times \BE) = \BJ + \omega^2 \epsilon \BE,\ \ \ {\rm in} \ \Omega, \\[2mm]
(\mu^{-1} \nabla \times \BE) \times \Bn =  i \omega \lambda \BE_T + \Bg,\ \  \ {\rm on} \ \partial \Omega.
\end{array}
\right.
\end{equation}
It follows from Theorem \ref{lemma2} that $\mu^{-1}\nabla \times \BE \in H^1(\Omega)$ and
\begin{equation}\label{3-16} \begin{array}{l}
 \| \mu^{-1} \nabla \times \BE\|_{H^1(\Omega)}\\[2mm]
 \leq C \|\BJ + \omega^2 \epsilon \BE\|_{L^2(\Omega)} + \| i \omega \lambda \BE_T + \Bg\|_{H^{\frac12}(\partial \Omega)} + C \|\mu^{-1} \nabla \times \BE\|_{L^2(\Omega)}\\[2mm]
\leq C \|\BJ \|_{L^2(\Omega)} + C \omega^2 \|\BE\|_{L^2(\Omega)} + C \|\Bg\|_{H^{\frac12}(\partial \Omega)} + C(1+\omega) \|\BE\|_{H^1(\Omega)}\\[2mm]
 \leq C(1+\omega)  \left(\|\BJ \|_{L^2(\Omega)}  + \|\Bg\|_{L^2(\partial \Omega)} \right)+ C \|\Bg\|_{H^{\frac12}(\partial \Omega)} ,
\end{array}
\end{equation}
where the last inequality is due to \eqref{stability} and  \eqref{3-12}.
That ends the proof of Theorem \ref{theorem1}.

\end{proof}

\begin{theorem}\label{theorem2}
Let $\Omega \subset \mathbb{R}^3$ be a bounded $C^{m+ 1}$-domain and star-shaped with respect to $B_\gamma (x_0)$. In addition to the assumptions made on $\BJ, \Bg$ and on the material coefficients in Sec. 1, we assume that
$\BJ \in H^{m-1}(\Omega)$ and $\Bg\in H^{m-\frac12}(\partial \Omega)$. Then  if $\BE$ is the solution to \eqref{pde_original},  $\BE$ belongs to the space $H^m({\rm curl}; \Omega)$ and there exists one constant $C$ independent of $\omega$, but depending on $\Omega$, $\lambda$, $\epsilon$, $\mu$, such that,
\begin{equation}\label{3-26} \begin{array}{l}
\|\nabla \times \BE\|_{H^m(\Omega)} + \omega \| \BE\|_{H^m(\Omega)} \\[2mm]
 \leq C (1+ \omega^m) \left( \|\BJ \|_{L^2(\Omega)} + \|\Bg\|_{L^2(\partial \Omega)} \right) + C \omega^{m-1} \|\Bg\|_{H^{\frac12}(\partial \Omega) }\\[2mm]\ \  \ \ \ \ \  + C \displaystyle \sum_{k=1}^{m-1}  \omega^{m - k -1} \left(   \|\BJ\|_{H^k(\Omega)}
+ \|\Bg\|_{H^{k+\frac12}(\partial \Omega)} \right).
\end{array}
\end{equation}
 In particular, when $m=2$,
\begin{equation} \label{3-26-1} \begin{array}{ll}
\omega \|\BE\|_{H^2(\Omega)}& \leq C (1 + \omega^2) \left( \|\BJ\|_{L^2(\Omega)} + \|\Bg\|_{L^2(\partial \Omega)} \right) + C\omega \|\Bg\|_{H^{\frac12}(\partial \Omega)}\\[2mm]& \ \ \ \ \ \ \ \  + C \left(  \|\BJ \cdot \Bn \|_{H^{\frac12}(\partial \Omega)} + \|\Bg\|_{H^{\frac32}(\partial \Omega)}  \right).
\end{array}
\end{equation}

\end{theorem}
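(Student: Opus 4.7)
My plan is to prove \eqref{3-26} by induction on $m$, with Theorem \ref{theorem1} serving as the base case $m=1$. Assuming the estimate holds at all indices up to $m-1$ and that the data satisfy $\BJ \in H^{m-1}(\Omega)$, $\Bg \in H^{m-\frac12}(\partial\Omega)$, the inductive step would reuse the three-stage template from the proof of Theorem \ref{theorem1}: a Helmholtz-Weyl split $\BE = \BP^0 + \nabla \psi$ via \eqref{3-2}-\eqref{3-3}, a separate high-order estimate for $\BP^0$ and $\psi$, and a final application of Theorem \ref{lemma2} to $\mu^{-1} \nabla \times \BE$ through the elliptic system \eqref{3-15}.

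For the divergence-free piece, Theorem \ref{lemma1} applied to \eqref{3-2} delivers
\[
\|\BP^0\|_{H^m(\Omega)} \leq C\bigl(\|\nabla \times \BE\|_{H^{m-1}(\Omega)} + \|\BE\|_{L^2(\Omega)}\bigr),
\]
and the right-hand side is already controlled by the induction hypothesis. For the scalar potential, taking tangential divergence in the boundary identity \eqref{3-6} and inserting \eqref{3-7} yields
\[
i \omega \lambda \, \Delta_{\partial \Omega} \psi \;=\; -\Bn \cdot (\BJ + \omega^2 \epsilon \BE) \;-\; i \omega \lambda \, {\rm div}_T \BP^0_T \;-\; {\rm div}_T \Bg.
\]
Elliptic lifting for the Laplace-Beltrami operator on the smooth surface $\partial \Omega$, at level $H^{m-\frac32}(\partial \Omega)$, then raises $\psi$ to $H^{m+\frac12}(\partial \Omega)$; standard regularity for the Dirichlet problem $\Delta \psi = 0$ in $\Omega$ subsequently upgrades $\psi$ to $H^{m+1}(\Omega)$, giving $\nabla \psi \in H^m(\Omega)$. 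Combining with the bound on $\BP^0$ produces the desired $\omega \|\BE\|_{H^m(\Omega)}$ estimate, and feeding this back into Theorem \ref{lemma2} applied to \eqref{3-15} controls $\|\mu^{-1}\nabla \times \BE\|_{H^m(\Omega)}$ through the terms $\|\BJ + \omega^2 \epsilon \BE\|_{H^{m-1}(\Omega)}$ and $\|i\omega\lambda \BE_T + \Bg\|_{H^{m-\frac12}(\partial\Omega)}$.

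The main obstacle I anticipate is the $\omega$-bookkeeping that must make the various factors collapse into the mixed sum on the right-hand side of \eqref{3-26}. The accounting should be that each use of the equation to exchange a derivative of $\nabla \times \BE$ for a derivative of $\BJ$ or of $\omega^2 \epsilon \BE$ costs either one factor of $\omega$ (through the boundary term $\omega \BE_T$) or two (through $\omega^2 \epsilon \BE$ in the bulk); chaining these through the induction produces the weights $\omega^{m-k-1}$ on $\|\BJ\|_{H^k(\Omega)}$ and $\|\Bg\|_{H^{k+\frac12}(\partial\Omega)}$, while the top weight $\omega^m$ multiplies the lowest-regularity data together with the $L^2$-stability bound \eqref{stability}. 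For the refined case $m=2$ stated in \eqref{3-26-1}, the inequality bounds only $\omega\|\BE\|_{H^2(\Omega)}$, so stage (iii) is unnecessary; and in \eqref{3-7} only the normal trace $\Bn \cdot \BJ$ appears rather than the full vector field $\BJ$. Accordingly, at the single Laplace-Beltrami step where $\BJ$ enters, one may replace the crude bound $\|\BJ\|_{H^1(\Omega)}$ by the sharper $\|\BJ \cdot \Bn\|_{H^{\frac12}(\partial\Omega)}$, yielding the improvement visible in \eqref{3-26-1}.
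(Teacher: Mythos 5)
Your proposal follows essentially the same route as the paper: the same Helmholtz--Weyl split $\BE = \BP^0 + \nabla\psi$, Theorem \ref{lemma1} for $\BP^0$, the tangential-divergence identity \eqref{3-6}--\eqref{3-7} plus Laplace--Beltrami lifting and Dirichlet regularity for $\psi$, and Theorem \ref{lemma2} applied to \eqref{3-15} for $\nabla\times\BE$; your explicit induction on $m$ merely formalizes the paper's ``write $m=2$, the case $m\geq 3$ is similar,'' and your observation that only $\Bn\cdot\BJ$ enters, giving the sharper $\|\BJ\cdot\Bn\|_{H^{1/2}(\partial\Omega)}$ in \eqref{3-26-1}, is exactly the paper's refinement. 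The argument is correct.
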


\begin{proof} To simplify the discussion, we will write the proof for $m=2$ and the proof is similar when $m\geq 3$.
First, we use the same decomposition for $\BE$ as before, i. e. $\BE = \BP^0 + \nabla \psi$. According to Theorem \ref{theorem1}, the solution $E\in H^1({\rm curl}; \ \Omega)$. Applying Theorem \ref{lemma1} to the system \eqref{3-2}, we deduce that
\begin{equation}\label{3-27} \begin{array}{ll}
\|\BP^0 \|_{H^2(\Omega)} &\leq C \left( \|\nabla \times \BE\|_{H^1(\Omega)} + \|\BP^0\|_{L^2(\Omega)}  \right) \\[2mm]&\leq C \left(  \|\nabla \times \BE\|_{H^1(\Omega)} + \|\BE\|_{L^2(\Omega)} \right).
\end{array} \end{equation}
On the other hand, it follows from the classical regularity estimate for Laplace equation with Neumann boundary condition and the trace theorem that
\begin{equation}\label{3-28}
\|\psi\|_{H^2(\Omega)} \leq C \|\BE\|_{H^{\frac12}(\partial \Omega)} \leq C \|\BE\|_{H^1(\Omega)}.
\end{equation}

Next, we will improve the regularity of $\psi$. In this case,
\begin{equation}\label{3-29} \begin{array}{ll}
\left\|{\rm div}_T~ \left[  (\mu^{-1} \nabla \times \BE) \times \Bn \right] \right\|_{H^{\frac12}(\partial \Omega)}
& = \left\|  \Bn\cdot (\BJ + \omega^2 \epsilon \BE) \right\|_{H^{\frac12}(\partial \Omega)} \\[3mm]
& \leq \|\BJ \cdot \Bn \|_{H^{\frac12}(\partial \Omega)} + C \omega^2 \|\BE\|_{H^1(\Omega)},
\end{array}
\end{equation}
and
\begin{equation}\label{3-30}
\| i \omega \lambda \BP^0_T \|_{H^{\frac32} (\partial \Omega)}
\leq C \omega  \|\BP^0 \|_{H^2 (\Omega)} \leq C\omega  \left( \|\nabla \times \BE\|_{H^1(\Omega)}  + \|\BE\|_{L^2(\Omega)}\right).
\end{equation}
Taking the estimates \eqref{3-1}, \eqref{3-27}-\eqref{3-30} into the equality \eqref{3-6}, we have
\begin{equation} \begin{array}{l}
\omega \|{\rm div}_T \nabla_T \psi \|_{H^{\frac12}(\partial \Omega)} \\[2mm]
\leq C \|\BJ \cdot \Bn \|_{H^{\frac12}(\partial \Omega)} + C \omega^2 \|\BE\|_{H^1(\Omega)} + C \omega \left( \|\nabla \times \BE\|_{H^1(\Omega)}
+ \|\BE\|_{L^2(\Omega)} \right) + C \|\Bg\|_{H^{\frac32}(\partial \Omega)}\\[2mm]
\leq C \|\BJ \cdot \Bn \|_{H^{\frac12}(\partial \Omega)} + C (1+ \omega^2 ) \left(  \|\BJ\|_{L^2(\Omega)} + \|\Bg\|_{L^2(\partial \Omega)} \right) + C \omega \|\Bg\|_{H^{\frac12}(\partial \Omega)}+ C \|\Bg\|_{H^{\frac32}(\partial \Omega)}.
\end{array}
\end{equation}
Hence, it follows from the elliptic lifting theorem for the Laplace-Beltrami operator on smooth surfaces,
\begin{equation}\label{3-31} \begin{array}{l}
\omega \|\psi\|_{H^{\frac52} (\partial \Omega)} \\[3mm]
 \leq C \omega\left\| {\rm div}_T~ \nabla_T \psi \right\|_{H^{\frac12}(\partial \Omega)}
+ C \omega \left|    \int_{\partial \Omega} \psi\, dS     \right| \\[2mm]
\leq C \omega \|{\rm div}_T \nabla_T \psi \|_{H^{\frac12}(\partial \Omega)} + C \omega \|\psi \|_{H^2(\Omega)}\\[2mm]
\leq C (1 + \omega^2) \left( \| \BJ \|_{L^2(\Omega)} + \|\Bg\|_{L^2(\partial \Omega)}\right)
+ C \omega \|\Bg\|_{H^{\frac12} (\partial \Omega)} \\[2mm]\ \ \ \ \ \
 + C \left( \|\BJ \cdot \Bn  \|_{H^{\frac12}(\partial \Omega)} + \|\Bg\|_{H^{\frac32}(\partial \Omega)}      \right),
\end{array}
\end{equation}
which gives that
\begin{equation}\nonumber
\omega \|\psi \|_{H^3(\Omega)} \leq C (1 + \omega^2) \left( \| \BJ \|_{L^2(\Omega)} + \|\Bg\|_{L^2(\partial \Omega)}\right)
+ C \omega \|\Bg\|_{H^{\frac12} (\partial \Omega)} + C \left( \|\BJ \cdot \Bn \|_{H^{\frac12}(\partial \Omega)} + \|\Bg\|_{H^{\frac32}(\partial \Omega)}      \right) ,
\end{equation}
and consequently,
\begin{equation}\nonumber
\omega \|\BE\|_{H^2(\Omega)} \leq  C(1 + \omega^2) \left( \| \BJ \|_{L^2(\Omega)} + \|\Bg\|_{L^2(\partial \Omega)}\right)
+ C \omega \|\Bg\|_{H^{\frac12} (\partial \Omega)} + C \left( \|\BJ \cdot \Bn \|_{H^{\frac12}(\partial \Omega)} + \|\Bg\|_{H^{\frac32}(\partial \Omega)}      \right) .
\end{equation}

The $H^2$-estimate for $\nabla \times E$, as above,  follows from Theorem \ref{lemma2}.


\end{proof}

\begin{remark} If the assumption that $\Omega$ is star-shaped with respect to $B_\gamma (x_0)$ does not hold, i.e., we only assume that $\Omega$ is a bounded $C^{m+1}$-domain, we can also prove that $\BE$ belongs to $H^m({\rm curl}; \Omega)$ by following the same line as above. The regularity estimates can be written as follows,
\begin{equation}\nonumber
\|\nabla \times \BE\|_{H^m(\Omega)} + \|\BE\|_{H^m(\Omega)}
\leq C \|\BJ \|_{H^{m-1}(\Omega)} + C \|\Bg\|_{H^{m -  \frac12}(\partial \Omega)},
\end{equation}
where $C$ depends on $\omega$.
\end{remark}

\begin{remark}\label{non-divergence}
Let us give some discussion for the case ${\rm div}~\BJ \neq 0$. Assume $\BJ \in H^{m -1}(\Omega)$, $\Bg\in H^{m -\frac12}(\partial \Omega)$, but ${\rm div}~\BJ \neq 0$. There exists  a classical
Helmholtz-Weyl decomposition for $\BJ$, i. e. $\BJ = \BJ_0 + \nabla p$. Here $\BJ_0 \in H^{m-1}(\Omega)$ with ${\rm div}~\BJ_0 = 0$ in $\Omega$, and $p \in H^m (\Omega)$ satisfies
\begin{equation} \nonumber
\left\{
\begin{array}{l}
\Delta p = {\rm div}~\BJ, \ \ \ \ {\rm in} \ \Omega, \\[2mm]
p=0,\ \ \ \ \   {\rm on}  \  \partial \Omega.
\end{array}\right.
\end{equation}
Suppose $\BE_0$ is the solution to \eqref{pde_original} with $\BJ$ replaced by $\BJ_0$, it follows from Theorems \ref{theorem1} and \ref{theorem2} that
$\BE_0 \in H^m({\rm curl}, \Omega)$. Let $q = - p /(\omega^2 \epsilon)$. It is easy to check that $\BE = \BE_0 + \nabla q$ is the unique solution to \eqref{pde_original}, and it holds that
\begin{equation}\nonumber \begin{array}{l}
\omega \|\BE_0\|_{H^m(\Omega)} + \|\nabla \times \BE_0\|_{H^m(\Omega)} \\[2mm]
\leq C (1+ \omega^m) \left( \|\BJ \|_{L^2(\Omega)} + \|\Bg\|_{L^2(\partial \Omega)} \right) + C \omega^{m-1} \|\Bg\|_{H^{\frac12}(\partial \Omega) }\\[2mm] \ \ \ \ \ + C \sum_{k=1}^{m-1}  \omega^{m - k -1} \left(   \|\BJ\|_{H^k(\Omega)}
+ \|\Bg\|_{H^{k+\frac12}(\partial \Omega)} \right).
\end{array}
\end{equation}
and
\begin{equation}\nonumber
\omega^2 \|\nabla q\|_{H^{m-1}(\Omega)} \leq C \|\BJ \|_{H^{m-1}(\Omega)}.
\end{equation}
If we further assume that $\BJ \in H^{m-1}({\rm div}; \Omega)$, i.e., $\BJ \in H^{m-1}(\Omega)$ and ${\rm div}~\BJ \in H^{m-1}(\Omega)$, then
$\nabla q \in H^m(\Omega)$, and
\begin{equation}\nonumber
\omega^2 \|\nabla q \|_{H^m(\Omega)} \leq C \|{\rm div}~\BJ\|_{H^{m-1}(\Omega)}.
\end{equation}
\end{remark}

\section{ $W^{m, p}$-estimates}

In this section, we generalize the $H^m$-estimates for $\BE$ and $\nabla \times \BE$ to $W^{m, p}$-space.

\begin{theorem}\label{theorem3}
Let $\Omega \subset \mathbb{R}^3$ be a bounded $C^{m+ 1}$-domain and star-shaped with respect to $B_\gamma(x_0)$. In addition to the assumptions made on $\BJ, \Bg$ and on the material coefficients in Sec. 1, we assume that
$\BJ \in W^{m-1, p}(\Omega)$ and
$\Bg\in W^{m - \frac1p, p}(\partial \Omega)$, $p > 2$. Then if $\BE$ is the solution to \eqref{pde_original}, $\BE$ belongs to the space $W^{m, p}({\rm curl}; \Omega)$. Moreover, there exists one constant $C$ independent of $\omega$, but depending on $\Omega$, $\lambda$, $\epsilon$, $\mu$,  such that,  when $2< p \leq 6$,
\begin{equation}\label{4-00}
\|\nabla \times \BE\|_{W^{m, p}(\Omega)} + \omega \|\BE\|_{W^{m, p}(\Omega)} \leq C( 1 + \omega^{m+1}) \left(\|\BJ \|_{W^{m-1, p}(\Omega)}
+ \|\Bg\|_{W^{m - \frac1p, p}(\partial \Omega)}  \right),
\end{equation}
and when $p>6$,
\begin{equation}
\|\nabla \times \BE\|_{W^{m, p}(\Omega)} + \omega \|\BE\|_{W^{m, p}(\Omega)} \leq C ( 1 + \omega^{m+2}) \left(\|\BJ \|_{W^{m-1, p}(\Omega)}
+ \|\Bg\|_{W^{m - \frac1p, p}(\partial \Omega)}  \right).
\end{equation}

\end{theorem}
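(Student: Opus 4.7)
The plan is to mirror the proof of Theorem \ref{theorem2} step by step, with $W^{s,p}$ norms in place of $H^s$ norms throughout, and to bootstrap off the $H^m$ estimates of Theorems \ref{theorem1}--\ref{theorem2} at the bottom of the induction via Sobolev embedding. Induction on $m$, the Helmholtz--Weyl decomposition $\BE=\BP^0+\nabla\psi$ from \eqref{3-2}--\eqref{3-3}, and the two Friedrichs-type inequalities of Theorems \ref{lemma1} and \ref{lemma2} will drive the whole argument. The dichotomy at $p=6$ enters only at the base of the induction: for $2<p\le 6$ one controls $L^p$ norms by $H^1$ norms through the embedding $H^1(\Omega)\hookrightarrow L^p(\Omega)$ and invokes Theorem \ref{theorem1}, paying a single factor of $\omega$; for $p>6$ one must instead use $H^2(\Omega)\hookrightarrow L^p(\Omega)$ and invoke Theorem \ref{theorem2}, which costs one more factor of $\omega$.

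Concretely, for the base case $m=1$ Theorem \ref{lemma1} bounds $\|\BP^0\|_{W^{1,p}(\Omega)}$ by $\|\curl\BE\|_{L^p(\Omega)}+\|\BE\|_{L^p(\Omega)}$, and each of the two terms is controlled through the Sobolev embedding above. For the gradient part I would follow the proof of Theorem \ref{theorem1}: combining \eqref{3-6} and \eqref{3-7} yields the surface equation
\begin{equation*}
i\omega\lambda\,{\rm div}_T\nabla_T\psi = -\Bn\cdot(\BJ+\omega^2\epsilon\BE) - i\omega\lambda\,{\rm div}_T\BP^0_T - {\rm div}_T\Bg,
\end{equation*}
whose right-hand side I would estimate in $W^{-1/p,p}(\partial\Omega)$ using \eqref{normal} together with ${\rm div}\,\BJ=0$ and ${\rm div}\,\BE=0$ for the first term, the $\BP^0$ bound and the trace theorem for the second, and the datum $\Bg$ for the third. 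Elliptic lifting for the Laplace--Beltrami operator then gives $\psi|_{\partial\Omega}\in W^{2-1/p,p}(\partial\Omega)$, and $W^{2,p}$-regularity for the Dirichlet Laplacian promotes this to $\psi\in W^{2,p}(\Omega)$; the $W^{1,p}$ bound for $\curl\BE$ then follows from Theorem \ref{lemma2} applied to the system \eqref{3-15}. The inductive step $m-1\Rightarrow m$ is identical in structure with higher-order norms, using the $W^{m-1,p}$ bound on $\curl\BE$ supplied by the induction hypothesis and the fact that the normal trace of $\BJ$ is controlled by $\|\BJ\|_{W^{m-1,p}(\Omega)}$ since ${\rm div}\,\BJ=0$; each step adds one factor of $\omega$, producing the exponents $m+1$ for $p\le 6$ and $m+2$ for $p>6$ that appear in \eqref{4-00}.

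The main obstacle is careful bookkeeping of the $\omega$-dependence: each application of the surface elliptic lifting must be paired with the explicit factor $\omega$ on the left of the Laplace--Beltrami equation, and the $\omega^2\epsilon\BE$ term on the right must be closed by feeding back the lower-order estimate rather than being absorbed into a constant. A secondary subtlety is that the $L^p$ term on the right of \eqref{lemma1-1} must be fed from the wave-explicit $H^1$ or $H^2$ estimate rather than from an $L^2$ bound, which is precisely why the $p\le 6$ and $p>6$ branches have to be separated.
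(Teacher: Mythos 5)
Your overall strategy -- Helmholtz--Weyl decomposition, the two Friedrichs-type inequalities, the tangential-divergence identity \eqref{3-7} with the Laplace--Beltrami lifting, Dirichlet regularity for $\psi$, and closing with Theorem \ref{lemma2} for $\nabla\times\BE$ -- is exactly the paper's route, and your treatment of the range $2<p\le 6$ (controlling the $L^p$ terms via $H^1(\Omega)\hookrightarrow L^p(\Omega)$ and Theorem \ref{theorem1}) coincides with the paper's Case I. However, your handling of the base case for $p>6$ has a genuine gap. You propose to ``use $H^2(\Omega)\hookrightarrow L^p(\Omega)$ and invoke Theorem \ref{theorem2}'', but Theorem \ref{theorem2} with $m=2$ requires $\BJ\in H^{1}(\Omega)$ and $\Bg\in H^{3/2}(\partial\Omega)$ (its bound even involves $\|\BJ\cdot\Bn\|_{H^{1/2}(\partial\Omega)}$ and $\|\Bg\|_{H^{3/2}(\partial\Omega)}$), whereas the hypotheses of Theorem \ref{theorem3} at $m=1$ only give $\BJ\in L^p(\Omega)$ and $\Bg\in W^{1-\frac1p,p}(\partial\Omega)$. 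No matter how large $p$ is, $L^p(\Omega)\not\subset H^1(\Omega)$ and $W^{1-\frac1p,p}(\partial\Omega)\not\subset H^{3/2}(\partial\Omega)$, since one cannot gain smoothness ($1-\tfrac1p<1<\tfrac32$) by trading integrability. So the $H^2$-estimate is simply not available with the data at hand, and this step of your argument fails; moreover the final bound must be expressed in $\|\BJ\|_{L^p}$ and $\|\Bg\|_{W^{1-\frac1p,p}(\partial\Omega)}$, which your invocation of Theorem \ref{theorem2} would not produce.

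The paper closes this case differently, and you should adopt that bootstrap: for $p>6$ first apply the already-proved Case I with exponent $6$ (legitimate because $L^p(\Omega)\subset L^6(\Omega)$ and $W^{1-\frac1p,p}(\partial\Omega)\subset W^{\frac56,6}(\partial\Omega)$), obtaining
\begin{equation*}
\omega\|\BE\|_{W^{1,6}(\Omega)}+\|\nabla\times\BE\|_{W^{1,6}(\Omega)}\le C(1+\omega^2)\bigl(\|\BJ\|_{L^p(\Omega)}+\|\Bg\|_{W^{1-\frac1p,p}(\partial\Omega)}\bigr).
\end{equation*}
Then apply Theorem \ref{lemma1} to $\BP^0$ at the level $(m,p)=(2,6)$ -- which needs only $\nabla\times\BE\in W^{1,6}(\Omega)$ and $\BP^0\cdot\Bn=0$, no extra regularity of the data -- together with the Sobolev embedding $W^{2,6}(\Omega)\hookrightarrow W^{1,p}(\Omega)$ (and $W^{1,6}(\Omega)\hookrightarrow L^\infty(\Omega)$ for the $\omega^2\epsilon\BE$ term on the boundary). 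This is what costs the single extra power of $\omega$ and yields the exponent $m+2$; the rest of your outline (surface estimate in $W^{-\frac1p,p}(\partial\Omega)$, $W^{2,p}$ regularity for $\psi$, Theorem \ref{lemma2} for $\nabla\times\BE$, and the induction in $m$ with the trace of $\BJ$ controlled by $\|\BJ\|_{W^{m-1,p}(\Omega)}$) then goes through as in the paper.
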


\begin{proof}
The proof is similar to that of Theorem \ref{theorem1}. We will write the proof for $m=1$. For some technical reasons, we divide the proof into two cases: $2< p \leq 6$ and $p >6$.

{\bf Case I:\ $2< p \leq 6$}\ \ Since $\BJ \in L^p(\Omega) \subset L^2(\Omega)$, and $\Bg\in W^{1- \frac1p, p}(\partial \Omega) \subset H^{\frac12}(\partial \Omega)$, it follows from Theorem \ref{theorem1} that $\BE \in H^1({\rm curl}; \Omega)$, and
\begin{equation}\label{4-0} \begin{array}{ll}
\omega \|\BE\|_{H^1(\Omega)} + \|\nabla \times \BE\|_{H^1(\Omega)} & \leq  C(1+ \omega) \left( \|\BJ \|_{L^2(\Omega)} +
\|\Bg\|_{L^2(\partial \Omega) }\right) + C \|\Bg\|_{H^{\frac12}(\partial \Omega)}\\[2mm]
& \leq C (1 + \omega) \left( \|\BJ \|_{L^p(\Omega)} + \|\Bg\|_{W^{1-\frac1p, p}(\partial \Omega) }\right).
\end{array}
\end{equation}

 We use the same Helmholtz-Weyl decomposition for $\BE$ as before, i. e., $\BE = \BP^0 + \nabla \psi$. According to classical theory for Helmholtz-Weyl decomposition\cite{AS, KY}, it holds that
\begin{equation}\nonumber
\left\| \BP^0 \right\|_{L^p(\Omega)} + \|\nabla \psi\|_{L^p(\Omega)} \leq C \|\BE\|_{L^p(\Omega)}.
\end{equation}
Hence, it follows from Theorem \ref{lemma1}  that
\begin{equation}\label{4-1} \begin{array}{ll}
\|\BP^0\|_{W^{1, p}(\Omega)}& \leq  C \left(  \|\nabla \times \BE\|_{L^p(\Omega)} + \|\BP^0\|_{L^p(\Omega)}     \right)\\[2mm]
& \leq C\left(\|\nabla \times \BE\|_{L^p(\Omega)} + \| \BE\|_{L^p(\Omega)} \right) \\[2mm]
& \leq C \left( \|\nabla \times \BE\|_{H^1(\Omega)} + \|\BE\|_{H^1(\Omega)} \right),
\end{array}
\end{equation}
where for the last inequality we used the Sobolev embedding result $L^p(\Omega) \subset H^1(\Omega)$, for $2< p \leq 6$(this is the technical reason why we divide the proof into two cases).

Next we will give the estimates for $\psi$. If follows from the inequality \eqref{normal} and the estimate \eqref{4-0} that
\begin{equation}\label{4-4} \begin{array}{l}
\left \|{\rm div}_T~[(\mu^{-1} \nabla \times \BE) \times \Bn] \right\|_{W^{-\frac1p, p}(\partial \Omega)}\\[2mm]
= \left \| \Bn \cdot ( \BJ + \omega^2 \epsilon \BE ) \right\|_{W^{-\frac1p, p}(\partial \Omega)}\\[2mm]
\leq C \left\|\BJ + \omega^2 \epsilon \BE  \right\|_{L^p(\Omega)} + C \left\|{\rm div} ~(\BJ + \omega^2 \epsilon \BE)      \right \|_{L^p(\Omega)}\\[2mm]
\leq C \|\BJ \|_{L^p(\Omega)} + C \omega^2 \|\BE\|_{L^p(\Omega) }\\[2mm]
\leq C \|\BJ \|_{L^p(\Omega)} + C \omega^2 \|\BE\|_{H^1(\Omega)},
\end{array}
\end{equation}
On the other hand,
\begin{equation}\label{4-5}
\left\| i \omega \lambda \BP^0_T \right\|_{W^{1 - \frac1p, p}(\partial \Omega)}
\leq C \omega \left\|\BP^0 \right\|_{W^{1, p}(\Omega)} \leq C \omega \left( \|\nabla \times \BE\|_{H^1(\Omega)}  + \|\BE\|_{H^1(\Omega)} \right).
\end{equation}
Collecting the estimates \eqref{4-0}-\eqref{4-5}, we can get
\begin{equation}\nonumber \begin{array}{l}
\omega \|\psi\|_{W^{2- \frac1p, p}(\partial \Omega)} \\[2mm]
\leq C  \left\|{\rm div}_T~\omega \nabla_T \psi \right\|_{W^{-\frac1p, p}(\partial \Omega)} + C \left|  \int_{\partial \Omega} \omega \psi \, dS     \right|\\[3mm]
 \leq C \left\| {\rm div}_T ~ \left[  (\mu^{-1} \nabla \times \BE) \times \Bn      \right] \right\|_{W^{-\frac1p, p}(\partial \Omega)} + C \left \|i \omega
\lambda \BP^0_T \right \|_{W^{1-\frac1p, p}(\partial \Omega)}\\[3mm] \ \ \ \ + \|\Bg\|_{W^{1-\frac1p, p}(\partial \Omega)} + C\omega \|\psi \|_{H^1(\Omega)}\\[3mm]
 \leq C \| \BJ \|_{L^p(\Omega)} + C \|\Bg\|_{W^{1- \frac1p, p}(\partial \Omega)} + C (1+\omega)\omega \|\BE\|_{H^1(\Omega)} + C \omega \|\nabla \times \BE\|_{H^1(\Omega)}\\[2mm]
\leq C (1+ \omega^2) \left( \|\BJ \|_{L^p(\Omega)} + \|\Bg\|_{W^{1-\frac1p, p}(\partial \Omega)}     \right).
\end{array}
\end{equation}
By the regularity theory for Laplace equations, we have
\begin{equation}\label{4-7}
\omega \|\psi\|_{W^{2, p}(\Omega)}\leq C ( 1 + \omega^2) \left(\|\BJ \|_{L^p(\Omega)} +  \|\Bg\|_{W^{1-\frac1p, p}(\partial \Omega)}\right),
\end{equation}
which together with \eqref{4-0}- \eqref{4-1} gives that
\begin{equation}\label{4-8}
\omega \|\BE\|_{W^{1, p}(\Omega)}\leq C (1+ \omega^2) \left( \|\BJ \|_{L^p(\Omega)} +  \|\Bg\|_{W^{1-\frac1p, p}(\partial \Omega)}\right).
\end{equation}

Similarly, we give the $W^{1, p}$-estimate for $\nabla \times E$. Applying Theorem \ref{lemma2}, one can easily deduce that
\begin{equation}\label{4-10} \begin{array}{l}
\|\mu^{-1}\nabla \times \BE\|_{W^{1, p}(\Omega)} \\[2mm]
 \leq C \|J + \omega^2\epsilon \BE\|_{L^p(\Omega)} + C \| i \omega \lambda \BE_T + \Bg\|_{W^{1- \frac1p, p}(\partial \Omega)} + C \|\mu^{-1}\nabla \times \BE\|_{L^p(\Omega) }\\[2mm]
\leq C (1+ \omega^2) \left( \|\BJ \|_{L^p(\Omega)} +  \|\Bg\|_{W^{1-\frac1p, p}(\partial \Omega)}\right).
\end{array}
\end{equation}

{\bf Case II:  $p> 6$}\ \ As before, $\BE$ is
decomposed as
$\BE = \BP^0 + \nabla \psi.$
Since $\BJ \in L^p(\Omega) \subset L^6(\Omega)$ and $\Bg\in W^{1-\frac1p, p}(\partial \Omega)\subset W^{\frac56, 6}(\partial \Omega)$, the solution
$\BE \in W^{1, 6}({\rm curl}; \Omega)$. Moreover,
\begin{equation} \label{4-20} \begin{array}{l}
\omega\|\BE\|_{W^{1, 6}(\Omega)} + \| \nabla \times \BE\|_{W^{1, 6}(\Omega)} \\[2mm]
 \leq C (1+ \omega^2) \left( \|\BJ \|_{L^6 (\Omega)} +  C \|\Bg\|_{W^{\frac56, 6}(\partial \Omega)}\right) \\[2mm]
 \leq C (1+ \omega^2) \left( \|\BJ \|_{L^p(\Omega)} + C \|\Bg\|_{W^{1 -\frac1p, p}(\partial \Omega)}\right).
\end{array}
\end{equation}
According to Theorem \ref{lemma1} and Sobolev embedding theorem,
\begin{equation}\nonumber
\|\BP^0 \|_{W^{1, p}(\Omega)} \leq C \|\BP^0 \|_{W^{2, 6}(\Omega)} \leq C \left(  \|\nabla \times \BE\|_{W^{1, 6}(\Omega)} + \|\BE\|_{L^6(\Omega)} \right).
\end{equation}
Hence,
\begin{equation}\label{4-21}
\omega \|\BP^0\|_{W^{1, p}(\Omega)} \leq C (1 + \omega^3) \left( \|\BJ \|_{L^p(\Omega)} + \|\Bg\|_{W^{1 -\frac1p, p}(\partial \Omega)}  \right).
\end{equation}
Meanwhile,
\begin{equation}\label{4-22}
\omega \|\psi \|_{H^1(\Omega)} \leq C \omega \|\BE\|_{L^2(\Omega)} \leq C \|\BJ \|_{L^2(\Omega)} + C \|\Bg\|_{L^2(\partial \Omega)}
\leq C \|\BJ \|_{L^p (\Omega)} + C \|\Bg\|_{W^{1- \frac1p, p}(\partial \Omega)}.
\end{equation}

Next we will get the $W^{1, p}$-estimate for $\nabla \psi$.
\begin{equation}\label{4-23}
\begin{array}{l}
\omega \|\psi\|_{W^{2- \frac1p, p}(\partial \Omega)} \\[2mm] \displaystyle
\leq C \left \| {\rm div}_T~\omega \nabla_T \psi \right \|_{W^{-\frac1p, p}(\partial \Omega)} + C \left|  \int_{\partial \Omega} \omega \psi \, dS     \right|\\[3mm]
 \leq C \left\| {\rm div}_T ~ \left[  (\mu^{-1} \nabla \times \BE) \times \Bn      \right] \right\|_{W^{-\frac1p, p}(\partial \Omega)} + C \omega  \left\|
\BP^0_T \right\|_{W^{1-\frac1p, p}(\partial \Omega)} \\[3mm]\ \ \ \ + C \|\Bg\|_{W^{1-\frac1p, p}(\partial \Omega)} + C \omega\|\psi \|_{H^1(\Omega)}\\[3mm]
\leq C \left\| \BJ + \omega^2 \epsilon \BE  \right\|_{L^p(\Omega)} + C \omega \| \BP^0\|_{W^{1, p}(\Omega)} + C \|\Bg\|_{W^{1-\frac1p, p}(\partial \Omega)}
+ C \omega\|\psi \|_{H^1(\Omega)}.
\end{array}
\end{equation}
Combining the estimates \eqref{4-20}-\eqref{4-23}, we have
\begin{equation}\nonumber
\omega\|\psi \|_{W^{2, p} (\Omega)} \leq C (1 + \omega^3 ) \left( \|\BJ \|_{L^p(\Omega)} +  \|\Bg\|_{L^p(\partial \Omega)}\right)
+ C (1+ \omega^2 )\|\Bg\|_{W^{1-\frac1p, p}(\partial \Omega)},
\end{equation}
and consequently,
\begin{equation}\label{4-25}
\omega \|\BE\|_{W^{1, p}(\Omega)} \leq C (1 + \omega^3 ) \left( \|\BJ \|_{L^p(\Omega)} +  \|\Bg\|_{W^{1 -\frac1p, p} (\partial \Omega)}\right)  .
\end{equation}

The $W^{1, p}$-estimate for $\nabla \times \BE$ follows from Theorem \ref{lemma2}.
\end{proof}

When $1< p < 2$, the problem becomes a little more complicated. When $\BJ \in L^p(\Omega)$, $\Bg\in W^{1-\frac1p, p}(\partial \Omega)$, the existence of solutions is not clear. So we put aside this case. Instead, we assume more regularity on $\BJ$ and $\Bg$. When $\BJ \in W^{m-1, p}(\Omega)$, $\Bg\in W^{m- \frac1p, p}(\partial \Omega)$ and $m \geq 2$, according to Sobolev embedding theorem, $\BJ \in L^2(\Omega)$ and $\Bg\in L^2(\partial \Omega)$. Hence the existence of the solution $\BE\in L^2({\rm curl}; \Omega)$ is guaranteed. Following the same proof as above, we can prove the following result:
\begin{theorem}
Let $m \geq 2$ and  $\Omega \subset \mathbb{R}^3$ be a bounded $C^{m+ 1}$-domain. In addition to the assumptions made on $\BJ, \Bg$ and on the material coefficients in Sec. 1, we assume that $\BJ \in W^{m-1, p}(\Omega)$ and
$\Bg\in W^{m- \frac1p, p}(\partial \Omega)$. Then if $\BE$ is the solution to \eqref{pde_original},
$\BE$ belongs to the space $W^{m, p}(\bf {curl}; \Omega)$, and 
 there exists one constant $C$ independent of $\omega$, but depending on $\Omega$, $\lambda$, $\epsilon$, $\mu$,$p$,  such that, 
\begin{equation}\label{4-30}
\omega \|\BE\|_{W^{m, p}(\Omega)} + \|\nabla \times \BE\|_{W^{m, p}(\Omega)} \leq C(1 + \omega^{m+1})  \left( \|\BJ\|_{W^{m-1, p}(\Omega)} + \|\Bg\|_{W^{m-\frac1p, p}(\partial \Omega)} \right).
\end{equation}

\end{theorem}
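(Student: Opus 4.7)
The proof follows the same pattern as Theorem \ref{theorem3}, using Theorem \ref{theorem2} as the base estimate in place of Theorem \ref{theorem1}. First, since $m \geq 2$, the Sobolev embeddings $W^{m-1,p}(\Omega) \hookrightarrow L^2(\Omega)$ and $W^{m-\frac{1}{p},p}(\partial\Omega) \hookrightarrow L^2(\partial\Omega)$ in three dimensions guarantee that the hypotheses of the existence theorem are met, so the weak solution $\BE \in L^2({\rm curl};\Omega)$ exists and by Theorem \ref{theorem2} belongs to $H^m({\rm curl};\Omega)$ with an estimate carrying the prefactor $1+\omega^m$. This initial $H^m$-bound supplies control of every lower-order $L^p$-norm of $\BE$ that will appear on the right-hand side of the Friedrichs-type inequalities.

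Next, I would reuse the Helmholtz--Weyl split $\BE = \BP^0 + \nabla\psi$ from \eqref{3-2}--\eqref{3-3}. The $L^p$-stability $\|\BP^0\|_{L^p(\Omega)} + \|\nabla\psi\|_{L^p(\Omega)} \leq C\|\BE\|_{L^p(\Omega)}$ is classical for $1 < p < \infty$, and Theorem \ref{lemma1} applied to \eqref{3-2} yields
\[
\|\BP^0\|_{W^{m,p}(\Omega)} \leq C\bigl(\|\nabla\times\BE\|_{W^{m-1,p}(\Omega)} + \|\BE\|_{L^p(\Omega)}\bigr).
\]
Rewriting the impedance condition as \eqref{3-6} and using the surface divergence identity \eqref{3-7}, the elliptic lifting for the Laplace--Beltrami operator combined with $W^{m+1,p}$-regularity of the Neumann Laplacian produces a $W^{m+1,p}$-bound on $\psi$ in terms of $\|\BJ\|_{W^{m-1,p}(\Omega)}$, $\omega^2\|\BE\|_{W^{m-1,p}(\Omega)}$, $\omega\|\BP^0\|_{W^{m,p}(\Omega)}$ and $\|\Bg\|_{W^{m-\frac{1}{p},p}(\partial\Omega)}$. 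The $W^{m,p}$-estimate for $\nabla\times\BE$ then follows from Theorem \ref{lemma2} applied to the elliptic-type system \eqref{3-15}, exactly as in \eqref{4-10}.

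The $\omega$-dependence is tracked by induction on $m$. At each inductive step the $\omega^2\BE$ term arising in \eqref{3-7} and the $\omega\BP^0_T$ term in \eqref{3-6} together contribute one additional power of $\omega$ beyond the factor $1+\omega^m$ already accumulated from the $H^m$ base estimate, yielding the advertised prefactor $1+\omega^{m+1}$. The base case $m=2$ is handled precisely as in Case~I of the proof of Theorem \ref{theorem3}, substituting the $H^2$-estimate \eqref{3-26-1} for the $H^1$-estimate \eqref{3-1}.

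The main obstacle is the $\omega$-bookkeeping: one must verify that every intermediate Sobolev embedding and every invocation of the Friedrichs-type inequalities introduces no more than a single factor of $\omega$ per induction step, so that the cumulative power remains $\omega^{m+1}$ and does not inflate. A secondary technical point appears at the lowest level $m=2$ when $p \in (1,6/5)$, for which $W^{1,p}(\Omega) \hookrightarrow L^2(\Omega)$ fails in three dimensions; there one must first use $W^{1,p} \hookrightarrow L^{p^{\ast}}$ with $p^{\ast}=3p/(3-p)$ and iterate the Helmholtz bootstrap through an intermediate exponent, or else tacitly restrict to $p \geq 6/5$. For $m \geq 3$ the embedding is automatic and the argument goes through unchanged.
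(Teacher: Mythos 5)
Your overall plan (Helmholtz--Weyl split, the two Friedrichs-type inequalities, the Laplace--Beltrami lifting plus Laplace regularity for $\psi$, then Theorem \ref{lemma2} for $\nabla\times\BE$) is exactly the route the paper intends: its own proof of this theorem is nothing more than the remark that it follows ``by the same proof'' as Theorem \ref{theorem3} once existence is secured by embedding the data into $L^2$. The genuine gap is your base step. This theorem is aimed at the range $1<p<2$ (for $p\geq 2$ the statement is essentially Theorem \ref{theorem3}), and for $p<2$ you may not invoke Theorem \ref{theorem2} at order $m$: its hypotheses $\BJ\in H^{m-1}(\Omega)$ and $\Bg\in H^{m-\frac12}(\partial\Omega)$ do \emph{not} follow from $\BJ\in W^{m-1,p}(\Omega)$ and $\Bg\in W^{m-\frac1p,p}(\partial\Omega)$, since there is no same-order embedding $W^{k,p}\hookrightarrow H^{k}$ when $p<2$ (neither in $\Omega$ nor on the two-dimensional boundary). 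Likewise your proposed base case via \eqref{3-26-1} needs $\BJ\cdot\Bn\in H^{\frac12}(\partial\Omega)$ and $\Bg\in H^{\frac32}(\partial\Omega)$, which the assumptions do not provide. Hence the ``initial $H^m({\rm curl};\Omega)$ bound with prefactor $1+\omega^m$'' on which your induction and your $\omega$-bookkeeping rest simply is not available in the regime the theorem addresses.

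The repair, and what ``the same proof as above'' actually amounts to, is to take as base only the $L^2$-stability \eqref{stability} and the $H^1$-estimate \eqref{3-1}; these are legitimately available because for $m\geq 2$ the assumed data embed into $L^2(\Omega)$, $L^2(\partial\Omega)$ and $H^{\frac12}(\partial\Omega)$ (modulo the caveat you correctly flagged at $m=2$, $1<p<6/5$, which the paper itself glosses over). Then run the bootstrap entirely inside the $W^{k,p}$ scale for $k=1,\dots,m$: at each level apply Theorem \ref{lemma1} to $\BP^0$, the bound of type \eqref{normal} to $\Bn\cdot(\BJ+\omega^2\epsilon\BE)$, the Laplace--Beltrami lifting and $W^{k+1,p}$ Dirichlet regularity for $\psi$, and Theorem \ref{lemma2} for $\nabla\times\BE$, using now that $\|\cdot\|_{L^p(\Omega)}\leq C\|\cdot\|_{L^2(\Omega)}$ on a bounded domain for $p<2$ in place of the embedding $H^1(\Omega)\hookrightarrow L^p(\Omega)$ that Case I of Theorem \ref{theorem3} used (the only place the restriction $p\leq 6$ entered). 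Each step costs at most one extra power of $\omega$ through the $\omega^2\epsilon\BE$ and $i\omega\lambda\BP^0_T$ terms, so starting from the prefactor $1+\omega$ of \eqref{3-1} one arrives at $1+\omega^{m+1}$ (in fact one can do slightly better), which is what \eqref{4-30} asserts; with this corrected base your remaining steps go through essentially unchanged.
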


\vspace{5mm}

{\bf Acknowledgment}\ \ The work of the second author was partially supported by NSFC grant No. 11671289. The work of the third author was partially supported by NSFC grant No. 11671302.

\end{document}